\documentclass[11pt]{article}
\usepackage{amsmath}
\usepackage{xcolor}
\usepackage{amssymb}
\usepackage{amsthm}
\usepackage{cite, comment}
\usepackage{pgf,tikz,pgfplots}
\pgfplotsset{compat=1.14}
\usepackage{mathrsfs}
\usetikzlibrary{arrows}
\usepackage{mathrsfs}
\usepackage{enumitem}

\newtheorem{theorem}{Theorem}
\newtheorem{thmx}{Theorem}

\newtheorem{lemma}[theorem]{Lemma}
\newtheorem{proposition}[theorem]{Proposition}

\theoremstyle{definition}

\newtheorem{question}[theorem]{Question}

\newtheorem*{claim*}{Claim}

\newenvironment{proofofclaim}[1]{\par\noindent\underline{Proof of claim:}\space#1}{\hfill $\blacktriangle$}

           % Borel reducible
\newcommand{\R}{\mathbb{R}}

\newcommand{\N}{\mathbb{N}}

\newcommand{\F}{\mathcal{F}}
\newcommand{\G}{\mathcal{G}}
\renewcommand{\H}{\mathcal{H}}

\newcommand{\cl}[1]{\overline{#1}}
\renewcommand{\int}{\text{int\,}}
\newcommand{\bd}{\partial}
\newcommand{\diam}{\text{diam\,}}

\newcommand{\mesh}{\text{mesh\,}}
\newcommand{\T}{\mathsf{T}}
\newcommand{\CT}{\mathsf{CT}}
\newcommand{\C}{\mathsf{C}}
\newcommand{\LEO}{\mathsf{LEO}}
\newcommand{\DP}{\mathsf{DP}}
\newcommand{\LC}{\mathsf{LC}}
\newcommand{\M}{\mathsf{M}}
\newcommand{\SH}{\mathsf{SH}}

\def\subjclass#1{\par\medskip
\noindent\textbf{Mathematics Subject Classification (2020):} #1}
\def\keywords#1{\par\medskip
\noindent\textbf{Keywords.} #1}

\setlength{\textwidth}{15cm}  % pro potreby overleafu

%%% --------------------------------------------------------------
\begin{document}
\makeatletter

\title{Chaos on Peano continua}
\author{Klára Karasová
\footnote{Email: karasova@karlin.mff.cuni.cz; Orcid: 0000-0002-9019-443X.
This paper was supported by the grant GA UK n. 129024}
\ and
Benjamin Vejnar
\footnote{Email: vejnar@karlin.mff.cuni.cz; Orcid: 0000-0002-2833-5385. 
This paper was supported by the grant GACR 24-10705S}
\\ \\
Faculty of Mathematics and Physics \\
Charles University \\
Prague, Czechia\\ \\
}

\maketitle

\begin{abstract}
Generalizing the result of Agronsky and Ceder (1991), we prove that every Peano continuum admits a continuous transformation that is exact Devaney chaotic; that is, it has a dense set of periodic points, and every nonempty open set covers the entire space in finitely many iterations. 
We identify a natural class of Peano continua, containing all one-dimensional continua and all absolute neighborhood retracts, which allows us to create locally small perturbations. 
Using this method, we prove that within these specific classes of continua, exact Devaney chaotic systems are dense in all chain transitive systems, mixing systems are generic among chain transitive systems and shadowing is generic among all continuous systems.
\end{abstract}

\subjclass{Primary: 37B45 Secondary: 37B20, 37B05} 
\keywords{Dynamical system, Peano continuum, exact Devaney chaos, mixing, shadowing, chain transitive, generic property}

\section{Introduction} 
% Základní myšlenka
In this paper, we study dynamical systems $(X, T)$, where $X$ is a compact metric space and $T:X\to X$ is a continuous transformation.
For many spaces $X$, no nontrivial transformations exist -- for instance, on Cook continua, where the only continuous transformations are the identity and constant maps.
A lot of attention has been given to the existence of some \emph{nontrivial} or \emph{chaotic} dynamics on \emph{specific} compact metric spaces. 
In the past, the focus was given to the interval, the Cantor set and manifolds. In recent decades, dendrites, Peano continua or the Lelek fan have also become popular.
Moreover, in many cases, it turns out that complicated behavior is typical in the sense of Baire category. Let us briefly mention some of the related results.

% Konkrétní výsledky
% interval
The interval dynamics is the most studied with plenty of results.
The typical dynamical behavior on $I=[0,1]$ was investigated e.g. in \cite{AgronskyBrucknerLaczkovich} where it is shown that the union of all scrambled sets of a typical self-map on $I$ is first category and of measure zero. Also, the closure of the union of all attracting sets of a typical map is first category and measure zero, while for a transitive map it is necessarily the full $I$, and thus a typical map on $I$ is far from being transitive. 
Recently, it was proved that transitive transformations of $I$ form a set homeomorphic to the Hilbert space \cite{HeLiYang}.

% Cantor
The Cantor set  $C$ serves as the best space for demonstrating chaotic dynamics, see e.g. \cite{BoronskiKupkaOprocha} for a mixing, completely scrambled system on $C$. A typical homeomorphism and a typical map of $C$ were completely described up to conjugacy \cite{KechrisRosendal, BernardesDarji} (see also \cite{AkinGlasnerWeiss, KupkaOprocha} for a more familiar description). 
The typical homeomorphism on $C$ has zero entropy \cite{glasnerweiss}. Further, a typical self-map of $C$ has no periodic points and thus it is not Devaney chaotic on any subsystem, even though there is a dense subset of the space of self-maps of $C$ formed by maps with infinite entropy that are Devaney chaotic on some subsystem \cite{DanielloDarji}.

% manifolds:
One of the early related results dealing with manifolds is by Besicovitch, who constructed a transitive homeomorphism of the 2-dimensional sphere. Oxtoby found out that transitivity is in fact typical when restricted to continuous measure-preserving homeomorphisms.
%Besicovitch: %A problem on topological transformation of the plane, Fund. Math. 28(1937), 61-65
Methods of Oxtoby were later reused for proving the existence of chaotic homeomorphisms on $n$-dimensional manifolds ($n\geq 2$) or for Menger manifolds  \cite{KatoKawamuraTuncaliTymchatyn, AartsDaalderop}.
See the book \cite{AlpernPrasad} for more details on this topic.
%Leaving the case of specific spaces, interesting results were obtained for manifolds of dimension at least two:
Connected manifolds admit everywhere chaotic homeomorphisms \cite{Kato}, where being everywhere chaotic is a weaker condition than being topologically mixing.
Last, but not least, shadowing is generic in homeomorphisms of a manifold \cite{PilyuginPlamenevskaya}.

% Lelek fan
The question of (non)existence of chaotic maps seems to be more challenging for spaces that are arc-wise connected (unlike the Cantor set, for example) and yet are not locally connected (unlike Peano continua and manifolds). Nevertheless, there is a construction of a transitive homeomorphism on the Lelek fan \cite{BanicErcegKennedy}, which is an example of an arc-wise connected but not locally connected continuum. The Lelek fan even admits a completely scrambled weakly mixing homeomorphism \cite{Oprocha}.

% dendrites
In \cite{BoronskiMincStimac} it was shown that dendrites admit topologically mixing self-maps using a factorization through $[0,1]$.
In the invertible case, the situation is even more restrictive, since only very specific dendrites admit a sufficiently rich homeomorphism group. Every universal Wa\.zewski dendrite $D_n$, $n\geq 3$, admits a homeomorphism with the shadowing property \cite{CarvalhoDarji}.

% Peano one-dim.
Recently it was proved that shadowing is generic among all self-maps in the case of dendrites \cite{BrianMeddaughRaines} and even more generally in the case of so called graphites \cite{Meddaugh}. It was asked in the latter paper whether the class of graphites coincides with the class of all one-dimensional Peano continua. This is indeed the case, as 
every Peano continuum of dimension one admits retractions which are arbitrarily close to identity and whose range is a graph (see \cite{Manka} or \cite[Theorem 4.3]{KrupskiOMiljanowskiUngeheuer}).
Thus by \cite{Meddaugh} shadowing is a generic property in the space of continuous self-maps of any one-dimensional Peano continuum.
The generic map of a one-dimensional Peano continuum also has the set of chain-recurrent points zero-dimensional \cite{KrupskiOMiljanowskiUngeheuer}.

% konečně dimenzionální Peano continua
In 1991, Agronsky and Ceder constructed for every finitely dimensional Peano continuum $X$ an exact self-map of $X$ \cite{AgronskyCeder} (i.e. a map such that every nonempty open set covers the space in a finite number of iterations). However, articles dealing with the existence of nontrivial dynamics on Peano continua are rare.
% Naše výsledky
In this paper, we further develop the construction of Agronsky and Ceder in two directions. First, we are able to capture all Peano continua (even the infinitely dimensional) and second, we get a dense set of periodic points of the resulting exact map. Thus, we get:

\begin{thmx}[with details in Theorem \ref{existenceofexactDevaney}]\label{Main}
Every Peano continuum admits an exact Devaney chaotic map.
\end{thmx}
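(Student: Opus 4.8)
The plan is to realise the required map as an inverse limit of exact maps living on simpler spaces. Represent $X$ as an inverse limit $X=\varprojlim(P_n,f_n)$ in which each $P_n$ is a connected finite polyhedron --- in particular a finite-dimensional Peano continuum, precisely the setting of Agronsky and Ceder --- and the bonding maps $f_n\colon P_{n+1}\to P_n$ are surjective; such a representation is obtained by the standard nerve construction applied to a refining sequence of finite open covers of $X$ whose meshes tend to $0$, and crucially the dimensions of the $P_n$ need not stay bounded, which is how the infinite-dimensional case gets captured. One then aims to build self-maps $T_n\colon P_n\to P_n$ that are simultaneously (i) exact, (ii) equipped with a dense set of periodic points, and (iii) compatible with the bonding maps in the sense that $f_n\circ T_{n+1}=T_n\circ f_n$. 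Granting this, $T:=\varprojlim T_n$ is automatically a well-defined continuous self-map of $X$, and the remaining work is to check that exactness and density of periodic points survive the passage to the limit.

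On a single finite-dimensional Peano continuum an exact map with dense periodic points is available: for a polyhedron one can take a piecewise-linear ``expanding'' Markov map --- each cell of a fine triangulation stretched affinely onto a union of cells, with a primitive transition pattern, so that fixed points of the affine pieces and their preimages are dense --- and this is the Agronsky--Ceder-type step, arranged so that periodic points are visible. The tower $(T_n)$ is then assembled by induction on $n$, and the decisive extra requirement, beyond $f_n\circ T_{n+1}=T_n\circ f_n$, is that $T_{n+1}$ map each fibre $f_n^{-1}(z)$ \emph{onto} the fibre $f_n^{-1}(T_n(z))$. Indeed, a nonempty open $U\subseteq X$ contains a basic set $g_n^{-1}(V)$, where $g_n\colon X\to P_n$ is the projection and $V\subseteq P_n$ is open; exactness of $T_n$ gives $m$ with $T_n^m(V)=P_n$, and then the fibrewise-onto property, applied level by level, lets one lift any prescribed thread of $X$ back into $g_n^{-1}(V)$, which yields $T^m(U)=X$. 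A parallel lifting argument, starting from an $\varepsilon$-dense periodic point of $T_n$ and using the same fibrewise control together with a fixed point placed inside the relevant fibre restrictions, produces periodic points of $T$ dense in $X$.

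The main obstacle is exactly this web of compatibilities. An arbitrary choice of exact maps $T_n$ will not intertwine with prescribed bonding maps, and even when it does, exactness of the individual $T_n$ only delivers $g_n(T^m(U))=P_n$, never $T^m(U)=X$ on its own --- it is the fibrewise surjectivity that closes that gap, and imposing it forces the covers, their nerves, the maps $f_n$ and the dynamics $T_{n+1}$ to be chosen in a tightly coordinated order, the combinatorics fixed at level $n$ constraining all the choices at level $n+1$ (including arranging that each fibre restriction of a suitable power $T_{n+1}^q$ over a periodic point again carries a periodic point, which is what makes the periodic orbits lift). Orchestrating these choices so that properties (i)--(iii) all propagate to the limit is the technical heart of the argument; once that is done, continuity of $T$ is free and exactness and density of periodic points follow from the two lifting arguments above, which is exactly the assertion of the theorem. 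A more hands-on alternative would instead build $T$ directly as a uniform limit of self-maps of $X$ that are ``exact up to scale $1/n$'' and carry $1/n$-dense periodic orbits, the difficulty there being to ensure that perturbations made at late stages do not destroy the exactness already secured.
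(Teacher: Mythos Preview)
Your proposal identifies a plausible strategy but leaves the decisive step undone. You explicitly say that ``orchestrating these choices so that properties (i)--(iii) all propagate to the limit is the technical heart of the argument,'' and then you do not carry out that orchestration. This is not a minor omission: the compatibility demand $f_n\circ T_{n+1}=T_n\circ f_n$ together with fibrewise surjectivity is extremely rigid, and there is no evident mechanism for producing an exact Markov map $T_{n+1}$ on $P_{n+1}$ that respects a \emph{prescribed} bonding map $f_n$ whose fibres you do not control (in the nerve construction they need be neither connected nor tame). Your periodic-point lifting is even more fragile: a periodic point of $T_n$ lifts to a periodic point of $T$ only if the induced self-map of the inverse-limit fibre over that orbit has a fixed point, and you have given no reason why those fibres should enjoy any fixed-point property. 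As written, this is a programme, not a proof.

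The paper avoids the inverse-limit apparatus entirely and works directly on $X$. The key device is the class $\LC(X)$ of maps that are locally constant on a dense open set: such a map can be surgically redefined on a small Peano subcontinuum (where it is constant) to hit any prescribed Peano target, without disturbing anything outside. Starting from a single surjection $f\in\LC(X)$ and the trivial cover $\{X\}$, one inductively refines a sequence of Peano-continuum covers $\F_n$ with $\mesh\F_n\to 0$; at stage $n$ the map $g_{n-1}$ is modified on tiny pieces inside each $F\in\F_n$ so that $g_n(F)$ now contains some member of $\F_{n-1}$, and simultaneously a periodic orbit is planted through $F$. The uniform limit $g$ is exact because every nonempty open set eventually contains some $F\in\F_n$, and has dense periodic points because the planted orbits are never touched again. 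This is precisely your ``hands-on alternative'' made to work; the reason late perturbations do not destroy earlier exactness is structural, not accidental: each $g_{n+1}$ is built so that $g_{n+1}(F)\supseteq g_n(F)$ for every $F$ in every earlier cover.
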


See the paper \cite{KwietniakMisiurewicz} for the motivation behind the notion of exact Devaney chaos.
Theorem \ref{Main} can be viewed as a parallel result to a classical one by 
Rolewicz, who found a bounded linear operator on a separable infinite-dimensional Hilbert space \cite{Rolewicz} and he asked whether every infinite-dimensional separable Banach space admits a transitive operator. Consequently,
a positive answer was given independently by several authors in between 1997--1999 \cite{Ansari, Bernal-Gonzalez, BonetPeris}.

We denote by \(\C(X,Y)\) the space of all continuous self-maps \(f : X \to Y\), equipped with the topology of uniform convergence. Let us denote by \(\LC(X,Y)\) the collection of all \(f \in \C(X,Y)\) that are locally constant on a dense (open) subset of \(X\); that is, \(f \in \LC(X,Y)\) if and only if there exists a dense set \(D \subseteq X\) such that for every \(x \in D\) there is a neighborhood \(U\) of \(x\) such that \(f|_U\) is constant.  If \(X = Y\), we simply write \(\LC(X)\) instead of \(\LC(X,Y)\).

Clearly, \(\LC(X,Y)\) contains all constant functions, but it also includes, for example, the Devil’s staircase function (also known as the Cantor ternary function) when \(X = Y = [0,1]\). Having this in mind, it can be easily seen that $\LC([0,1])$ is, in fact, dense in $\C([0,1])$. That is no exception, as we show in Theorem \ref{LCpodminka} that $\LC(X)$ is dense in $\C(X)$ whenever $X$ is an absolute neighborhood retract (in particular, if $X$ is a compact manifold), or when $X$ is a one-dimensional Peano continuum.

Nevertheless, despite Theorem \ref{LCpodminka}, we do not know whether $\LC(X)$ is dense in $\C(X)$ for every Peano continuum $X$, and we suspect that it may not be. This motivates the following question, whose positive answer would have direct consequences -- for example, for \cite[Question 10.2(1)]{KrupskiOMiljanowskiUngeheuer}.

\begin{question}\label{otazkaPeano}
Does there exist a nondegenerate Peano continuum $X$ for which one of the following conditions holds?
\begin{itemize}[noitemsep]
\item $\LC(X)$ is not dense in $\C(X)$.
\item $id_X\notin\cl{\LC(X)}$.
\item $id_X$ is an isolated point of $\C(X)$.
\end{itemize}
\end{question}

The reason why we introduce the class $\LC(X)$ is that the maps in $\LC(X)$ are very flexible and allow us to produce locally small perturbations. Consequently, if $\LC(X)$ is dense in $\C(X)$, then any $f\in \C(X)$ can be uniformly approximated by flexible maps and thus we are able to use a generalized version of the window perturbation method (see e.g. \cite{BobokCincOprochaTrubetzkoy}). 

However, such an abundance of flexible maps has structural consequences. In Proposition \ref{CTnowheredense}, we show that if $\LC(X)$ is dense in $\C(X)$, then $\CT(X)$ -- the subspace of all chain transitive self-maps on X -- is nowhere dense in $\C(X)$. Thus, transitivity can never be dense, let alone generic, in $\C(X)$, if $\LC(X)$ is dense in $\C(X)$. Note how this and Theorem \ref{LCpodminka} align with the known results listed above.

Given these limitations, we restrict our attention to $\CT(X)$, studying typical maps within this subspace, in contrast to prior work, which focus on how far a typical map in $\C(X)$ is from being transitive. And indeed, we are able to establish the following immediate consequence of Theorems  8, 11, 13, and 16:

\begin{thmx}\label{thmmain3}
Let $X$ be a one-dimensional Peano continuum, or an absolute neighborhood retract, or more generally, let $X$ be a Peano continuum with $\LC(X)$ dense in $\C(X)$. Then
\begin{itemize}[noitemsep]
\item exact Devaney chaotic maps are dense in $\CT(X)$,  
\item generic map $f\in \CT(X)$ is mixing,
\item generic map $f\in \C(X)$ has shadowing.
\end{itemize}
\end{thmx}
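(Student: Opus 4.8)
The plan is to derive Theorem~\ref{thmmain3} from four results proved separately in the paper --- that exact Devaney chaotic maps are dense in $\CT(X)$; that the mixing maps form a $G_\delta$ subset of $\C(X)$; that the maps with the shadowing property form a dense $G_\delta$ subset of $\C(X)$; and that $\CT(X)$ is closed in $\C(X)$ --- and then to assemble Theorem~\ref{thmmain3} by the classical ``dense $G_\delta$ in a Baire space'' paradigm. The first bullet is the first of these inputs verbatim. For the second, one observes that an exact map is surjective, so $f$ exact forces $f^n(U)=X$, and hence $f^n(U)\cap V\neq\emptyset$, for all large $n$ and all nonempty open $U,V$; thus every exact Devaney chaotic map is mixing, so the mixing maps are dense in $\CT(X)$, they form a $G_\delta$ subset of the space $\CT(X)$, which is a closed subspace of the Polish space $\C(X)$ and hence is itself Polish and Baire --- so mixing is generic in $\CT(X)$. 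The third bullet is then immediate, since $\C(X)$ is complete and hence Baire.

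It remains to establish the four inputs. The Baire property of $\CT(X)$ is easy: if $f_k\to f$ uniformly with each $f_k\in\CT(X)$, then given $x,y\in X$ and $\varepsilon>0$ one chooses $k$ with $d(f,f_k)<\varepsilon/2$ and notes that an $(\varepsilon/2)$-chain of $f_k$ from $x$ to $y$ is an $\varepsilon$-chain of $f$; hence $\CT(X)$ is closed. The $G_\delta$ statements for mixing and for shadowing are of a descriptive-set-theoretic nature: one writes each property, over a countable base of $X$, as a countable combination of conditions that are open (or at least $G_\delta$) in $f$ --- for mixing through the hitting-time sets $\{n:f^n(B_i)\cap B_j\neq\emptyset\}$, and for shadowing through its quantitative ``every $\delta$-pseudo-orbit is $\varepsilon$-shadowed'' formulation, first for finite pseudo-orbits and then upgraded to infinite ones by a compactness (K\"{o}nig's lemma) argument. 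These verifications take some care but are essentially standard.

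The core of the proof is the two remaining \emph{density} statements, and both run on the same engine, powered by the hypothesis that $\LC(X)$ is dense in $\C(X)$. Given the target map and $\varepsilon>0$, one first replaces it by a uniformly $\varepsilon/3$-close map $g\in\LC(X)$, and then performs a \emph{window perturbation}: a modification of $g$ supported on a pairwise disjoint family of tiny open windows $W_U$, one inside each member $U$ of a fine finite cover $\mathcal U$ of $X$, chosen in the region where $g$ is locally constant, so $g|_{W_U}\equiv c_U\in U$. Since $W_U$ lies in a ball of radius $<\varepsilon/3$ about $c_U$, one may redefine $g$ on $W_U$ by \emph{any} continuous map of $W_U$ into that ball while moving $g$ by less than $\varepsilon/3$ globally, so the new map $g'$ remains $\varepsilon$-close to the original. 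For the density of exact Devaney chaotic maps in $\CT(X)$ one transfers the chain transitivity of $f$ to ``$\varepsilon$-chain transitivity'' of $g$, fixes a single $\varepsilon$-cycle meeting every member of $\mathcal U$, and installs in the windows a controlled local expansion routed along that cycle --- as in the construction underlying Theorem~\ref{Main} --- together with a few short periodic arcs; composing the local expansions around the cycle spreads some fixed small open set over all of $X$ in finitely many iterations, and refining the cover makes the planted periodic points dense. For the density of shadowing one goes the other way, using the windows and the local constancy of $g$ to build a $g'$ that maps each cell of a fine cover cleanly into the interior of another cell, so that $g'$ factors, off a thin ``seam'' set, through a finite dynamical system on which pseudo-orbits are eventually pinned and thus trivially shadowed; the delicate point --- and the reason one needs a hypothesis on $X$ --- is that the transition structure must be arranged so as to avoid the seams, which is exactly the kind of local surgery that an abundance of flexible $\LC$-maps permits.

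The step I expect to be the main obstacle is this window perturbation, in both incarnations: one must extract genuinely global behaviour --- covering the whole space in a bounded number of steps in the first case, killing the branching of pseudo-orbits in the second --- from perturbations constrained to be uniformly small, with the only available leverage being the supply of flexible maps guaranteed by ``$\LC(X)$ dense in $\C(X)$'', which is precisely why Theorem~\ref{LCpodminka} (covering one-dimensional Peano continua and absolute neighborhood retracts) carries the real weight behind the three classes of spaces in the statement. A secondary difficulty is the descriptive-set-theoretic verification that mixing and shadowing are $G_\delta$; and one must not forget the small compatibility facts --- exact $\Rightarrow$ surjective $\Rightarrow$ mixing, exact Devaney chaotic $\Rightarrow$ chain transitive (so that density in $\CT(X)$, rather than in $\C(X)$, is the right target, the latter being hopeless by Proposition~\ref{CTnowheredense}), and $X$ nondegenerate so that it has no isolated points.
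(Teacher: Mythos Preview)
Your reduction of Theorem~\ref{thmmain3} to four inputs is sensible, and your window-perturbation sketch for the density of exact Devaney chaotic maps in $\CT(X)$ is broadly in line with the paper's Lemma~\ref{prop} and Theorem~\ref{LeoDPdense}. The substantive gap is in the second input: you assert that the mixing maps form a $G_\delta$ subset of $\C(X)$, but the routine Borel computation you gesture at does not deliver this. Writing mixing over a countable base as
\[
\M(X)=\bigcap_{i,j}\bigcup_{N}\bigcap_{n\ge N}\{f:f^n(B_i)\cap B_j\neq\emptyset\}
\]
gives only a $\Pi^0_3$ description, not $\Pi^0_2$, because of the ``$\exists N\,\forall n\ge N$'' block; nothing in your sketch collapses this. (The analogous claim for shadowing suffers the same difficulty: the ``$\forall\varepsilon\,\exists\delta\,\forall$ chain $\exists$ shadow'' pattern is not visibly $G_\delta$ either.) Without the $G_\delta$ step, the ``dense $+$ $G_\delta$ $+$ Baire'' assembly for the second and third bullets does not go through.

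The paper's route is genuinely different here and sidesteps the complexity question entirely. Rather than proving that $\M(X)$ or $\SH(X)$ is itself $G_\delta$, the paper builds, for each $n$ (resp.\ each $\varepsilon$), a map $g$ and a radius $\xi>0$ such that the \emph{entire ball} $B(g,\xi)$ in $\C(X)$ lies inside the approximating set $G_n$ (resp.\ $G_\varepsilon$); see Lemmas~\ref{Gisgeneric} and~\ref{G_epsareopendense}. Thus $\int(G_n\cap\CT(X)\cap\cl{\LC(X)})$ is open and dense, and the comeager witness is $\bigcap_n \int(G_n\cap\ldots)$, which is trivially $G_\delta$. The point is that the window perturbation is engineered to be \emph{robust}: the inserted local stretching (via the auxiliary maps $s_H$, $t_H$) is arranged so that any $h$ with $d(g,h)<\xi$ still pushes a small Peano subcontinuum across the required targets at each step. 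This robustness is strictly stronger than the density you aim for, and it is exactly what replaces the missing $G_\delta$ fact. Your shadowing sketch (``factor through a finite system off a seam'') would at best give density of $\SH(X)$, not an open neighbourhood of shadowing-like maps, and so runs into the same obstruction; the paper's construction in Lemma~\ref{G_epsareopendense} is quite different in character.
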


Since every compact manifold is an absolute neighborhood retract, Theorem \ref{thmmain3} applies directly in this setting.
Some aspects of Theorem \ref{thmmain3} with $X=[0,1]$ are related to \cite{BobokCincOprochaTrubetzkoy}.
Other relations regarding shadowing on some class of 1-dimensional continua (graphs, dendrites, arc-like continua) are in \cite{OprochaEtAlShadowing}.

\section{Preliminaries}
For a topological space $X$ and $A\subseteq X$, we denote by $\int A$, $\cl{A}$, and $\bd A$ the interior, the closure, and the boundary of $A$, respectively. In a metric space, we denote by $\diam A$ the diameter of $A$. If $\F$ is a family of subsets of $X$, we denote $\mesh \F = \sup\{\diam F;\,F\in \F\}$. Further, we denote by $B(x,\varepsilon)$ the open ball centered at $x\in X$ with radius $\varepsilon>0$.

A continuum is a compact connected metrizable topological space. 
An absolute retract is any space homeomorphic to a retract of the Hilbert cube.
If $A\subseteq X$, we say that $A$ is a neighborhood retract of $X$ if there is a neighborhood $U$ of $A$ in $X$ and a retraction $r:U\to A$. An absolute neighborhood retract is any compact space homeomorphic to a neighborhood retract of the Hilbert cube.
We use the notion of topological dimension according to \cite{EngelkingDim}.

\subsection{Peano continua}
A Peano continuum is a continuum which is locally connected. 
Throughout the whole article, we heavily use that every Peano continuum admits a compatible convex metric \cite[Theorem 8]{Bing}. Recall that a metric $d$ on a space $X$ is called \emph{convex} if for every $x,y\in X$ there exists $A\subseteq X$ isometric to $[0,d(x,y)]$ such that $x,y\in A$. Note that a space with a convex metric has all balls (no matter if open or closed) connected.

We say that a metric space $X$ has \emph{the property S} if for every $\varepsilon>0$ there is a finite cover $\F$ of the space $X$ consisting of connected (but not necessarily open) sets satisfying $\mesh\F<\varepsilon$. It is an easy consequence of compactness that every Peano continuum has the property S. Further, if $X$ is a metric space and $\varepsilon>0$, we say that $X$ is \emph{$\varepsilon$-partitionable} if there exists \emph{an $\varepsilon$-partitioning} $\G$ of the space $X$, i.e. there exists $\G$ -- a finite family of pairwise disjoint open connected subsets of $X$  satisfying $\mesh \G<\varepsilon$ and $\cl{\bigcup \G} =X$. We say that $X$ is \emph{partitionable}  if it is $\varepsilon$-partitionable for every $\varepsilon>0$.

The following theorem will be used later. Although we were unable to locate it in the existing literature, there are related results available (see, for example, \cite{MayerOversteegenTymchatyn}).

\begin{proposition}\label{partitions}
Let $X$ be a Peano continuum and $\varepsilon>0$. Then there exists a finite cover $\F$ of $X$ by Peano continua such that $\mesh\F\leq\varepsilon$ and the sets in $\F$ have nonempty pairwise disjoint interiors.
\end{proposition}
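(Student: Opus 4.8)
The plan is to build the cover $\F$ by first taking a fine open cover of $X$, extracting from it a controlled ``partition into open pieces'' using the property S, and then replacing each open piece by its closure, which will be a Peano continuum because $X$ carries a convex metric. Concretely, fix a compatible convex metric $d$ on $X$ (using \cite[Theorem 8]{Bing}). Since $X$ is a Peano continuum it has the property S, so for $\varepsilon/4$ there is a finite cover $\mathcal{C}=\{C_1,\dots,C_n\}$ of $X$ by connected sets of diameter $<\varepsilon/4$. Enlarging each $C_i$ slightly to an open connected set (take a suitable union of small balls around a dense finite subset of $C_i$, using local connectedness, or simply a connected open $\delta$-neighborhood and invoke convexity to keep it connected) one obtains a finite cover $\U=\{U_1,\dots,U_n\}$ of $X$ by open connected sets with $\mesh\,\U\leq\varepsilon/2$. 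The first main task is then to turn the overlapping family $\U$ into a family with disjoint interiors.

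For that I would proceed greedily: set $V_1=U_1$, and inductively $V_{k+1}=U_{k+1}\setminus\cl{V_1\cup\dots\cup V_k}$; then the $V_k$ are pairwise disjoint open sets whose union is dense in $X$ (every point of $X$ either lies in some $U_i$ and hence, if not already covered, in the corresponding $V_i$, or lies in the closure of the earlier ones). The difficulty is that $V_{k+1}$ need not be connected, and its closure need not be a Peano continuum. To repair connectedness I would pass to connected components: replace each $V_k$ by the collection of its connected components. Because $X$ is locally connected, these components are open; because $V_k$ is open and $X$ is compact with $\mesh$ controlled, one must still check that only finitely many components arise of ``non-negligible'' size, but in fact for the cover we may simply take, within each $V_k$, the components together with the (closed, connected) pieces of $\bd V_k$ needed to cover $X$ — alternatively, re-inflate: let $W$ range over the components of each $V_k$ and enlarge $W$ to $W'=$ the union of $\cl{W}$ with a thin connected collar inside the relevant $U_i$, chosen so that the $\int W'$ remain pairwise disjoint (possible by taking the collars to stay within $V_k$). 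This is the step I expect to be the main obstacle: simultaneously guaranteeing (i) the pieces are connected, (ii) their interiors are pairwise disjoint, and (iii) the pieces still cover $X$ — the tension is between (ii), which pushes toward shrinking, and (iii), which pushes toward enlarging. The convex metric is the key tool to keep enlargements connected and small.

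Once a finite family $\{W'_1,\dots,W'_m\}$ of connected closed sets is obtained with pairwise disjoint interiors, $\mesh \leq \varepsilon$, and $\bigcup W'_j = X$, it remains to see each $W'_j$ is a Peano continuum, i.e.\ locally connected. Here I would use convexity decisively: in a space with a convex metric, closed balls are connected, and more is true — sufficiently ``nice'' closed sets (e.g.\ finite unions of closed balls, or closures of open connected sets built from balls) are themselves locally connected continua. So I would arrange from the outset that each $W'_j$ is, up to the above surgery, the closure of a finite union of open metric balls; such a closure is a Peano continuum by a standard argument (it is compact, connected, and one checks local connectedness ball by ball using convexity). Thus $\F=\{W'_1,\dots,W'_m\}$ is the desired cover.

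I should note the alternative, cleaner route that I would actually try first: invoke Proposition~\ref{partitions}'s intended siblings — namely, show directly that $X$ is $\varepsilon$-partitionable (finite family of pairwise disjoint open connected sets, mesh $<\varepsilon$, dense union), then set $\F = \{\cl G : G \in \G\}$ plus, if needed, finitely many small Peano subcontinua covering the closed ``seams'' $X\setminus\bigcup\G$. The reduction ``$X$ partitionable'' is itself essentially the property-S-plus-greedy-disjointification argument above, so the real content — and the real obstacle — is unchanged: producing the disjoint-interior connected decomposition. Everything after that (closures are Peano, mesh is preserved under closure, the seam set is covered by finitely many small balls whose closures are Peano) is routine given the convex metric.
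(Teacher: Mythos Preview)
Your overall strategy---build an $\varepsilon$-partitioning $\G$ (pairwise disjoint open connected pieces with dense union) and then set $\F=\{\cl G:G\in\G\}$---is exactly the paper's approach, and you correctly identify the disjointification as the delicate step. However, the paper does not attempt the greedy surgery you describe; it simply quotes Bing: property~S implies partitionable \cite[Theorem~1]{Bing}, and moreover the partition can be chosen so that each member itself has property~S \cite[Theorem~4]{Bing}. Once $\G$ is obtained, $\F=\{\cl G:G\in\G\}$ already covers $X$ (since $\cl{\bigcup\G}=X$), so no seam-covering pieces are needed---and adding any would destroy the pairwise-disjoint-interiors condition. The short remaining argument is that $\int\cl{G_1}\cap\int\cl{G_2}=\emptyset$ whenever $G_1,G_2$ are disjoint open sets, which is a two-line exercise.

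The genuine gap in your write-up is the passage from ``open connected pieces'' to ``closures are Peano continua.'' This is false in general, even with a convex metric: in $X=[0,1]^2$ let
\[
G=\bigl((0,1)\times(\tfrac12,1)\bigr)\ \cup\ \bigcup_{n\ge 1}\Bigl(\bigl(\tfrac{1}{2n+1},\tfrac{1}{2n}\bigr)\times(0,1)\Bigr).
\]
Then $G$ is open and connected, but $\cl G$ fails local connectedness at $(0,0)$. This is precisely why the paper invokes \cite[Theorem~4]{Bing}: it guarantees each $G\in\G$ has property~S, and a continuum with property~S is Peano \cite[Theorem~8.4]{nadler}. Your proposed fix (``arrange each piece to be a finite union of open balls'') would indeed give Peano closures, but it is incompatible with your greedy step $V_{k+1}=U_{k+1}\setminus\cl{V_1\cup\dots\cup V_k}$, which cuts pieces out of balls and does not leave unions of balls behind; you also never resolve the fact that each $V_k$ may have infinitely many components. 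In short, the missing ingredient is exactly Bing's refinement theorem, and once you cite it the rest of your outline collapses to the paper's proof.
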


\begin{proof}
Since $X$ has the property S, it is partitionable by \cite[Theorem 1]{Bing}. Since $X$ is partitionable, there is $\G$ an $\varepsilon$-partitioning of $X$ such that each member of $\G$ has the property S by \cite[Theorem 4]{Bing}.  Let $\F:=\{\cl{G},G\in\G\}$. Clearly $\F$ is a finite cover of $X$ and $\mesh\F<\varepsilon$. Moreover, each member of $\F$ is easily seen to be a continuum with the property S and thus is a Peano continuum by \cite[Theorem 8.4]{nadler} .

Clearly, we may assume that $\emptyset\notin \G$ and thus all sets in $\F$ have nonempty interiors. It remains to check that the elements of $\F$ have pairwise disjoint interiors. Firstly, consider arbitrary $G\in\G$ and any open $U\subseteq X$ such that $G\cap U=\emptyset$. Then $\cl{G}\cap U=\emptyset$ since $U$ is open, and thus, in particular, $\int\cl{G}\cap U=\emptyset$. Finally, let $G_1\ne G_2\in\G$, then $G_1\cap G_2=\emptyset$. Hence, using the previous part for $G=G_1$ and $U=G_2$, we obtain $\int\cl{G_1}\cap G_2=\emptyset$. Thus, we can use the same part again for $G=G_2$ and $U=\int\cl{G_1}$, obtaining $\int\cl{G_1}\cap \int\cl{G_2}=\emptyset$, which concludes the proof.
\end{proof}

\begin{lemma} \label{intermediatethrPeano}
    Let $X$ be a Peano continuum, $f:X\to \R$ continuous and $I\subseteq f(X)$ a closed interval. Then there exists a Peano continuum $Y\subseteq X$ satisfying $f(Y)=I$. 
\end{lemma}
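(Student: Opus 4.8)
The plan is to reduce the statement to an "intermediate value" statement for Peano continua and then exploit the preimage structure. Write $I = [a,b] \subseteq f(X)$. First I would pick points $p, q \in X$ with $f(p) = a$ and $f(q) = b$. Since $X$ is a Peano continuum, it is arcwise connected, so there is an arc $A \subseteq X$ from $p$ to $q$; then $f(A)$ is a subcontinuum of $\R$ containing $a$ and $b$, hence $f(A) \supseteq I$. This shows we may freely assume $f(X) \supseteq I$ with witnesses $p,q$ — but it does not by itself give a Peano continuum mapping \emph{onto} $I$ and nothing more, since $f(A)$ could be strictly larger than $I$.

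The key step is to cut down. Consider $Z := f^{-1}(I)$, a closed (hence compact) subset of $X$ containing $p$ and $q$. The set $Z$ need not be connected, so I would pass to the connected component $Y_0$ of $Z$ containing $p$. The crucial claim is that $f(Y_0) = I$: clearly $f(Y_0) \subseteq I$; for the reverse inclusion I expect to use a standard "boundary bumping" / continuum-theoretic connectedness argument — if some value $c \in (a, f(p))$ or $c \in (f(p), b)$ were omitted by $f$ on $Y_0$, then $Y_0 \subseteq f^{-1}([a,c)) \cup f^{-1}((c,b])$ would split $Y_0$ into two relatively clopen pieces, contradicting connectedness, provided I can also show $Y_0$ reaches a point with $f$-value $a$ and one with $f$-value $b$. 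This last point is where the component (rather than just $Z$) matters, and it should follow from the fact that in a continuum, every component of a closed proper subset meets the boundary of that subset — applied to $f^{-1}([a,c])$ inside $f^{-1}(I)$, etc. So $Y_0$ is a continuum (closed connected subset of a compact space) with $f(Y_0) = I$.

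The remaining issue is that $Y_0$ is a continuum but perhaps not a \emph{Peano} continuum. Here I would invoke the local structure we have available: cover $X$ by Peano subcontinua with disjoint interiors as in Proposition \ref{partitions}, or more directly use that $Y_0$, being a subcontinuum of a Peano continuum, is contained in a small Peano subcontinuum; better, I would re-run the component argument but track Peano-ness. Concretely: enlarge $Y_0$ slightly to a Peano continuum. Since $X$ is locally connected, $f^{-1}((a - \delta, b + \delta))$ is an open set whose quasicomponents/components are open, and one can choose an open connected (hence, in a Peano continuum, arcwise connected and with Peano closure) set $V$ with $Y_0 \subseteq V \subseteq f^{-1}((a-\delta,b+\delta))$; replacing $I$ by a slightly larger interval $I' = [a', b']$ with $I \subseteq \int_{\R} I'$ and $I' \subseteq f(X)$, and taking $Y := \cl{V} \cap f^{-1}(I')$'s component... — this risks circularity, so the cleaner route is: apply the whole argument with $I$ replaced by successively shrinking intervals to realize $Y$ as a nested intersection, or simply note that $\overline{V}$ is a Peano continuum (closed connected subset with property S inherited from $X$), and then intersect with $f^{-1}(I)$ and take the $p$-component $Y$; since $\overline V$ is a Peano continuum, $Y$ is again a continuum, and one shows $f(Y) = I$ by the same connectedness argument, while $Y$ inherits local connectedness because $\overline V$ is locally connected and $Y = \overline{V} \cap f^{-1}(I)$ is... not obviously locally connected.

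The honest main obstacle, then, is exactly this: producing a \emph{locally connected} $Y$, not merely a continuum $Y$, with $f(Y) = I$. I expect the fix is to build $Y$ directly by hand as an increasing union (then closure) of small arcs chosen so that $f$ hits every value in $I$ and the resulting set is locally connected — i.e., build a "tree-like" Peano subcontinuum adapted to $f$ — or to use a structural theorem (e.g. every Peano continuum contains, through any two points, a Peano subcontinuum on which a given real map still attains all intermediate values, obtainable via the convex metric and Proposition \ref{partitions}). Given the tools developed in the excerpt — convex metrics, property S, and Proposition \ref{partitions} — I would lean on Proposition \ref{partitions}: refine a partition of $X$ into small Peano pieces, take the "chain" of pieces whose $f$-images are forced to overlap and cover $I$, and let $Y$ be the union of those (finitely many, pairwise-interior-disjoint) Peano continua, then restrict: the union of a finite chain of Peano continua is a Peano continuum, and a suitable sub-union has $f$-image exactly $I$ after intersecting with $f^{-1}(I)$ once more and taking a component — iterating this refinement and passing to a limit yields the desired $Y$. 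The delicate bookkeeping is ensuring the limit is still locally connected and still surjects onto $I$; that is where I'd spend the real effort.
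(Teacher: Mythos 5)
Your first paragraph already contains the winning move, but you then walk away from it. Having produced an arc $A\subseteq X$ from $p$ to $q$ with $f(A)\supseteq I$, the correct (and the paper's) next step is to cut down \emph{inside $A$}, not inside $X$: parametrize $A$ by a homeomorphism $\gamma\colon[0,1]\to A$ with $\gamma(0)=p$, $\gamma(1)=q$, set $g=f\circ\gamma$, let $t_2=\min\{t:g(t)=b\}$ and $t_1=\max\{t\leq t_2:g(t)=a\}$; the intermediate value theorem gives $g([t_1,t_2])=[a,b]$, so $Y=\gamma([t_1,t_2])$ is a subarc (or a point if $a=b$) with $f(Y)=I$, and an arc is automatically a Peano continuum. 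Your objection that ``$f(A)$ could be strictly larger than $I$'' is true but harmless, because the cutting-down is elementary once you are on an arc. This is exactly the ``suitable subarc'' the paper takes.

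Instead, you abandon the arc and try to extract $Y$ from a component of $f^{-1}(I)$ in all of $X$. That creates a genuinely hard problem you then cannot solve: as you yourself admit, the component is a continuum but there is no reason for it to be locally connected, and none of your proposed repairs (enlarging by open connected sets, chains of pieces from Proposition \ref{partitions}, nested limits) is carried to completion --- each one ends with ``not obviously locally connected'' or ``that is where I'd spend the real effort.'' So as written the proposal has a real gap: the final object $Y$ is never shown to be a Peano continuum. The gap is entirely self-inflicted; returning to the arc $A$ closes it in two lines.
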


\begin{proof}
Let $I=[a,b]$. Since $a,b\in f(X)$, there are $u,v\in X$ such that $f(u)=a$, $f(v)=b$. Since Peano continua are arcwise connected (see \cite[Theorem 8.23]{nadler}), there is an arc $A\subseteq X$ with endpoints $u,v$. Since $A$ is connected, so is $f(A)$. Moreover, since $a,b\in f(A)$, we get $[a,b]\subseteq f(A)$. It is now easy to find a suitable subarc $Y\subseteq A$ satisfying $f(Y)=I$.
\end{proof}

\subsection{Dynamical systems}
A dynamical system is a pair $(X,f)$, where $X$ is a compact metrizable topological space and $f:X\to X$ is a continuous map. As usual, we denote $f^0=id_X$ and $f^{n+1}=f\circ f^n$ for $n\geq 0$.
An \emph{$f$-orbit} of a point $x\in X$ is the sequence $(x,f(x),f^2(x),\dots)$.
By $\C(X)$ we mean the space of all maps from $X$ to itself equipped with the compact-open topology, which is, in fact, induced by the supremum metric.
This makes $\C(X)$ a complete separable metric space.

We say that $f$ is transitive if for every $U$, $V$ nonempty open subsets of $X$ there exists a natural number $n$ such that $f^n(U)$ intersects $V$ (or equivalently for a space without isolated points, there exists a point $x \in X$ such that the set $\{x, f(x), f^2(x), \dots\}$ is dense in $X$).
See e.g. \cite[Theorem 2.8]{AkinAuslanderNagar} for other characterizations of transitive maps.
We say that $f$ is mixing if for every pair of nonempty open sets $U,\,V\subseteq X$ there exists $N \in\N$ such that for every $n\geq N$, the set $f^n(U)\cap V$ is nonempty.
We say that a continuous map $f:X \to X$ is exact or LEO (locally eventually onto), if for every nonempty open $U \subseteq X$ there exists $n \in \N$ such that $f^n(U) = X$. 
We say that $f$ is Devaney chaotic if it is transitive and the set of periodic points of $f$ is dense in $X$. 
Following the terminology of \cite{KwietniakMisiurewicz}, we say that $f$ is exact Devaney chaotic if it is exact and Devaney chaotic. It is known that both Devaney chaotic maps and maps with positive topological entropy are Li-Yorke chaotic \cite{HuangYe, BlanchardGlasnerKolyadaMaass}. However, the first two notions are not related in general.

%chains - stuff

We say that $x_0,x_1,\dots,x_n\in X$ is an $\varepsilon$-chain (of $f$) from $x_0$ to $x_n$ of length $n$ if $d(f(x_i),x_{i+1})<\varepsilon$ for every $0\leq i\leq n-1$.
A map $f$ is called \emph{chain recurrent} if for every point $x\in X$ and $\delta>0$ there is a $\delta$-chain from $x$ to $x$.
A map $f$ is called \emph{chain transitive} if for every pair of points $x, y\in X$ and $\delta>0$ there is a $\delta$-chain from $x$ to $y$.
We say that $f$ is \emph{chain-mixing} if for every $\delta>0$ there exists $N\in\N$ such that for every $n\geq N$ and for every $x,y\in X$ there exists an $\delta$-chain from $x$ to $y$ of length exactly $n$.
In fact, if $X$ is connected, then chain recurrence is equivalent to both chain transitivity and chain mixing \cite[Corollary 14]{RichesonWiseman}. Moreover, $f$ is chain transitive iff no proper nonempty closed set $K\subseteq X$ satisfies $f(K)\subseteq \int(K)$
\cite[Theorem 4.12(4)]{Akin}.

% shadowing
A $\delta$-pseudo orbit is a sequence $(x_i)_{i\geq 0}$ such that $x_0,\dots, x_n$ is a $\delta$-chain for every $n\in\N$.
We say that $f$ has \emph{shadowing} or \emph{the shadowing property} if for every $\varepsilon>0$ there exists $\delta >0$ such that for every $\delta$-pseudo orbit $x_0,x_1, \dots$ there exists an $\varepsilon$-close  orbit, i.e. there exists $x\in X$ such that $d(x_i, f^i(x))<\varepsilon$ for every $i=0,1,\dots$.
By compactness of $X$, $f$ has shadowing iff for every $\varepsilon>0$ there is $\delta>0$ such that for every $\delta$-chain $(x_0,\dots, x_n)$ there is $x\in X$ such that $d(x_i, f^i(x))<\varepsilon$ for every $i=0,\dots,n$.

%Classes of maps
We denote as $\T(X), \CT(X), \LEO(X)$ and $\M(X)$ the collections of all transitive, chain transitive, locally eventually onto, and mixing maps, respectively. 
Furthermore, we denote by $\SH(X)$ and $\DP(X)$ the collections of maps with shadowing and with a dense set of periodic points, respectively. It is easy to observe that
\[\LEO(X)\subseteq \M(X)\subseteq \T(X)\subseteq \CT(X)=\cl{\CT(X)}.\]

Consequently, $\CT(X)$ is a complete space, so it makes sense to talk about a typical map in $\CT(X)$.

\subsection{The class $\LC$}

Let us denote by $\LC(X,Y)$ the collection of all $f \in \C(X,Y)$ that are locally constant on a dense (open) subset of $X$; that is, $f \in \LC(X,Y)$ if and only if there exists a dense set $D \subseteq X$ such that for every $x \in D$, there is a neighborhood $U$ of $x$ with $f|_U$ constant.

Clearly, $\LC(X,Y)$ contains all constant functions, but it also includes, for example, the Devil’s staircase function (also known as the Cantor ternary function) when $X = Y = [0,1]$. If $X = Y$, we simply write $\LC(X)$ instead of $\LC(X,Y)$.

In this subsection, we begin with a simple yet useful generalization of \cite[Theorem 8.19]{nadler}, and proceed to prove that if $\LC(X)$ is dense in $\C(X)$, then $\CT(X)$ is nowhere dense in $\C(X)$. We conclude by showing that $\LC(X)$ is dense in $\C(X)$ for a broad class of spaces—namely, all one-dimensional Peano continua and all absolute neighborhood retracts (compare with Question~\ref{otazkaPeano}).

\begin{lemma}\label{LCXY}
Let $X, Y$ be nondegenerate Peano continua, $K\subseteq X$ nowhere dense closed set, $x_1,\dots x_m\in X\setminus K$ distinct and $y_0,y_1,\dots,y_m\in Y$. Then there exists a surjective continuous map $f\in \LC(X,Y)$ satisfying $f(K)\subseteq \{y_0\}$, $f(x_1)=y_1$, $\dots$, $f(x_m)=y_m$.
\end{lemma}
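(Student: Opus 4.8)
The plan is to build $f$ in two stages. First I would produce a "skeleton" map that already takes the prescribed values, is locally constant on a dense open set, but need not be surjective; then I would absorb surjectivity by a final perturbation that stays inside $\LC(X,Y)$.

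For the first stage, I would invoke Proposition~\ref{partitions} repeatedly: taking $\varepsilon\to 0$ along a sequence and intersecting successive refinements, one gets a decreasing sequence of finite covers of $X$ by Peano continua with pairwise disjoint interiors whose mesh tends to $0$. This produces, in the standard way, a Cantor-set-like "decomposition" of $X$: the union $D$ of all the interiors (at every level) is dense and open, its complement is nowhere dense, and on each small piece at a fine enough level $f$ can be declared constant. Since $K$ is closed and nowhere dense and the $x_i$ lie outside $K$, after passing to a fine enough level I can arrange that $K$ is covered by pieces disjoint from all the pieces containing the $x_i$, and that the $x_i$ lie in the interiors of distinct pieces. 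Now I assign to each top-level piece a value in $Y$: the value $y_0$ to the pieces meeting $K$, the value $y_i$ to the piece containing $x_i$, and arbitrary values elsewhere; then on each piece I recurse, using the finer levels to interpolate continuously between the boundary values of neighbouring pieces via arcs in the Peano continuum $Y$ (here arcwise connectedness of $Y$, Lemma~\ref{intermediatethrPeano}-style reasoning, is what glues things together). The outcome is a continuous $f\colon X\to Y$ that is locally constant on the dense open set $D$, hence $f\in\LC(X,Y)$, with $f(K)=\{y_0\}$ and $f(x_i)=y_i$.

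For the second stage, surjectivity: fix a countable dense set $\{z_1,z_2,\dots\}\subseteq Y$. On the complement of $K\cup\{x_1,\dots,x_m\}$ there is, at every fine level of the decomposition, a piece $P$ on which the value of $f$ was chosen freely (or can be re-chosen) and whose boundary value is some single point $y$; inside such a $P$ I can, again using arcwise connectedness of $Y$, replace $f|_P$ by a map of $\LC(P,Y)$ that is $y$ on $\partial P$ and whose image contains $z_k$ — indeed whose image is an arc through $y$ and $z_k$. Doing this on a disjoint sequence of ever-smaller pieces, one for each $k$, keeps the map continuous (the modifications shrink in diameter of both domain and image as $k\to\infty$), keeps it in $\LC(X,Y)$, does not disturb the values on $K$ or at the $x_i$, and forces $\{z_1,z_2,\dots\}\subseteq f(X)$; since $f(X)$ is closed and $\{z_k\}$ is dense in $Y$, we get $f(X)=Y$.

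The main obstacle is the bookkeeping in the first stage: one must simultaneously (i) keep the covers nested with mesh $\to 0$ so that the limiting map is well defined and continuous, (ii) ensure the locally-constant region $D$ is genuinely dense — which forces the recursion to stop being "active" on a dense set of pieces — and yet (iii) leave enough non-frozen pieces, of arbitrarily small diameter and with small image, to carry out the interpolation between neighbouring prescribed values and later the surjectivity correction. Balancing "freeze most pieces to get density of $D$" against "keep enough free pieces to interpolate" is the delicate point; it works because at each level only finitely many pieces need to remain active (those on the "boundary" between different target values), and their diameters go to $0$, so their union is nowhere dense while their images stay controlled.
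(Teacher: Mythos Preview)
Your approach is quite different from the paper's, which is a one-paragraph argument: choose a null sequence of disjoint subcontinua $L_n\subseteq X\setminus K$ whose interiors have dense union in $X$ and with $x_i\in L_i$ for $1\le i\le m$; form the quotient $q\colon X\to Z$ collapsing each $L_n$ and also $K$ to a point (an upper semi-continuous decomposition, so $Z$ is again a nondegenerate Peano continuum); then invoke \cite[Theorem 8.19]{nadler} to get a continuous surjection $h\colon Z\to Y$ with $h(q(K))=y_0$ and $h(q(L_i))=y_i$, and set $f:=h\circ q$. The locally-constant property comes for free from the collapsing, surjectivity and the prescribed values come from Nadler's theorem; there is no iterative construction and no separate surjectivity stage.

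Your two-stage plan could in principle be pushed through, but as written the second stage has a genuine gap. You assert that the modifications ``shrink in diameter of both domain and image as $k\to\infty$'', yet you provide no mechanism for the \emph{image} diameters to shrink: the $k$-th modification has image an arc joining $y^{(k)}:=f(\partial P_k)$ to $z_k$, and since $\{z_k\}$ is dense in $Y$ these arcs cannot all be short unless the boundary values $y^{(k)}$ are themselves dense in $Y$ --- something your first stage does not arrange (the top level contributes only finitely many values, and the interpolating arcs need not fill $Y$). Without shrinking images the modified map is discontinuous at any accumulation point of the $P_k$. The easy repair is to abandon the infinite sequence of corrections and instead pick a \emph{single} Peano continuum $P$ with nonempty interior on which the first-stage $f$ is constant $\equiv y$, and replace $f|_P$ by a continuous surjection $P\to Y$ sending $\partial P$ to $y$ (collapse $\partial P$ and apply \cite[Theorem 8.19]{nadler}). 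But once you are willing to invoke that tool locally, you may as well use it globally as the paper does and bypass the delicate first-stage recursion entirely.
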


\begin{proof}
Let $L_n\subseteq X\setminus K$ be a null sequence of disjoint continua such that the union of the interiors of $L_n$ is dense in $X$ and $x_i\in L_i$ for every $1\leq i\leq m$.
Consider the quotient space $Z$ and the quotient map $q: X\to Z$ such that each $L_n$ and also the set $K$ are pushed to a point (if $K=\emptyset$, we let $Z=X$, and $q$ is the identity). Note that $Z$ is a nondegenerate Peano continuum since the quotient is obtained by an upper semi-continuous decomposition. Let $h: Z\to Y$ be a continuous surjective map satisfying $h(q(K))=y_0$ if $K\ne\emptyset$ and $h(q(L_1))=y_1,\dots,h(q(L_m))=y_m$ \cite[8.19]{nadler}. It is easy to see that $f:=h\circ q$ has the desired properties.
\end{proof} 

\begin{proposition}\label{CTnowheredense}
    Let $X$ be a continuum satisfying that $\LC(X)$ is dense in $\C(X)$. Then $\CT(X)$ is nowhere dense in $\C(X)$.
\end{proposition}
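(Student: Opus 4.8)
Since $\CT(X)$ is closed in $\C(X)$, it suffices to show that its complement is dense, and by the hypothesis that $\LC(X)$ is dense in $\C(X)$ it is enough to approximate an arbitrary $h\in\LC(X)$ by maps that are not chain transitive. I would phrase the target using the criterion recalled above: $g$ is chain transitive if and only if there is no proper nonempty closed $K\subseteq X$ with $g(K)\subseteq\int K$. So, given $h\in\LC(X)$ and $\varepsilon>0$, I want to build $g\in\C(X)$ with $\|g-h\|<\varepsilon$ that admits such a trapping set $K$. We may assume $X$ is nondegenerate (otherwise the statement is vacuous), that $h$ is nonconstant (a constant map traps a small proper closed ball about its value), and that $h$ is chain recurrent --- since on a connected space chain recurrence, chain transitivity and chain mixing coincide, a non--chain-recurrent $h$ already lies outside $\CT(X)$.

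The construction I have in mind is a window-type perturbation along a short returning chain. Fix a component $D_0$ of the dense open set on which $h$ is locally constant, say $h\equiv c_0$ there, and a point $p\in D_0$. For a small parameter $\delta>0$, chain recurrence of $h$ gives a $\delta$-chain $p=z_0,z_1,\dots,z_m=p$; after a small preliminary wiggle (possible since $X$, being a nondegenerate continuum, is perfect, and since the locally-constant set of $h$ is dense and open) I may take the points $z_0,\dots,z_{m-1}$ to be distinct and to lie in the locally-constant set of $h$. Then I would modify $h$ only on small balls $\overline{B}(z_i,r_i)$, each chosen inside a local-constancy neighbourhood of $z_i$, so that the new map $g$ pushes $\overline{B}(z_i,r_i)$ into $B(z_{i+1},r_{i+1})$ for every $i$ (indices mod $m$, with $r_m=r_0$): set $g$ equal to the constant $z_{i+1}$ on a concentric inner ball, interpolate on a thin spherical collar along a single fixed arc (supplied by the convex metric) from $z_{i+1}$ to the constant value $h(z_i)$, and keep $g=h$ elsewhere. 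Since $d(h(z_i),z_{i+1})<\delta$, every local modification changes values by at most $\delta$, so $\|g-h\|<\delta<\varepsilon$. The set $K:=\bigcup_{i<m}\overline{B}(z_i,r_i)$ is then nonempty, closed, proper (a finite union of sufficiently small balls cannot cover a nondegenerate Peano continuum), and $g(K)\subseteq\bigcup_{i<m}B(z_i,r_i)\subseteq\int K$, so $g\notin\CT(X)$ and we are done.

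The hard part is making this perturbation coherent, and I expect it to come down to two linked points. First, the order of quantifiers: the balls $\overline{B}(z_i,r_i)$ must have radius larger than $\delta$ (to catch the images, which land only $\delta$-close to the next centre) while still being small enough that $\bigcup_i\overline{B}(z_i,r_i)\ne X$ and that each sits well inside a local-constancy neighbourhood of $h$; this forces one to fix $\delta$ first, then freeze the finite set of chain points, then pick the radii, and to argue that there really is room to do so. This is where the flexibility afforded by $\LC(X)$ being dense should be spent --- one first replaces $h$ by a nearby $\LC$ map that is constant on sufficiently large balls around the (by then fixed) chain points, so that the thresholds on $r_i$ do not collapse below $\delta$. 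Second, continuity of the glued map across the spheres $\partial B(z_i,r_i)$ and across the inner spheres: here the convex metric is essential, and the reason the collar can be filled continuously is precisely that, $h$ being constant on a neighbourhood of $\overline{B}(z_i,r_i)$, the interpolation runs between two \emph{constant} maps along one fixed arc, so that no continuous selection of non-unique geodesics is required. Once these two points are handled, the remaining verifications --- $K\ne X$, $\int K\supseteq\bigcup_i B(z_i,r_i)$, and the bookkeeping of the $\varepsilon$'s --- are routine.
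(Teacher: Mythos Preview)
Your overall strategy matches the paper's: approximate by some $f\in\LC(X)$, run a short returning chain from a point in a local-constancy patch, and perform window perturbations along the chain to manufacture a nearby map admitting a proper closed $K$ with $g(K)\subseteq\int K$.

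The gap is exactly the quantifier issue you flag, and your proposed resolution does not close it. Density of $\LC(X)$ in $\C(X)$ only says that \emph{some} nearby map is locally constant on \emph{some} dense open set; it gives no control over where or how large the constancy patches are, so it cannot be invoked to force them to contain balls of radius exceeding a previously fixed $\delta$ about previously fixed chain points. What your fix would actually require is an extension-type statement (flatten $f$ to a constant on finitely many prescribed balls while staying uniformly close), and that is strictly more than the density hypothesis provides. The paper sidesteps the whole issue by \emph{not} asking the windows to be large. Rather than mapping each window into the interior of the next, it uses Lemma~\ref{LCXY} on each (possibly tiny) window $K_i$ to make the new map hit one prescribed interior point exactly, $g(y_i)=y_{i+1}$, while keeping $g|_{\bd K_i}=f|_{\bd K_i}$. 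This yields a genuine periodic orbit $x\mapsto y_1\mapsto\cdots\mapsto y_{n-1}\mapsto x$, with $g$ constant on a neighbourhood $L_n$ of $x$. The trapping set is then obtained \emph{a posteriori} by continuity: choose closed $L_{n-1}\ni y_{n-1}$ with $g(L_{n-1})\subseteq\int L_n$, then $L_{n-2}$, and so on; the union $L=L_0\cup\cdots\cup L_n$ satisfies $g(L)\subseteq\int L$. Since the $K_i$ carry no lower size bound, they can be taken inside whatever local-constancy patches $f$ happens to have, and the circularity you identified disappears.
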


\begin{proof}
    Since $\CT(X)$ is closed in $\C(X)$, we only need to check that $\C(X)\setminus\CT(X)$ is dense in $\C(X)$. We may assume that the metric $X$ is equipped with is convex since $X$ admits a compatible convex metric by \cite[Theorem 8]{Bing}. Hence let $f'\in\C(X)$ and $\varepsilon>0$. Since we assume that $\LC(X)$ is dense in $\C(X)$, there is $f\in\LC(X)$ such that $d(f,f')<\varepsilon/2$. We want to find $g\in \C(X)\setminus\CT(X)$ satisfying $d(f,g)<\varepsilon/2$. If $f\notin \CT(X)$ then we may put $g:=f$, thus assume that $f\in\CT(X)$.

    Fix some nonempty open $U$ where $f$ is constant, and pick any $x\in U$. Since $f\in\CT(X)$, we can find an $\varepsilon/2$-chain  $x_0,\dots,x_n$ such that $x_0= x_n=x$. We may assume that the points $x_0,\dots,x_{n-1}$ are distinct. Fix Peano continua $K_0,L_n$ such that $x\in\int L_n\subseteq L_n\subseteq\int K_0\subseteq K_0\subseteq U\setminus\{x_1,\dots,x_{n-1}\}$. For each $1\leq i\leq n-1$ fix a Peano continuum $K_i\subseteq B(x_i,\varepsilon/2)$ with nonempty interior such that $f$ is constant on each $K_i$ and the sets $K_0,K_1,\dots,K_{n-1}$ are pairwise disjoint. Further, for each $1\leq i\leq n-1$ fix a point $y_i\in\int K_i$. Also put $y_n:=x_n$.

    We are prepared now to define $g$. Let $g|_{X\setminus (K_0\cup\dots\cup K_{n-1})}:=f|_{X\setminus (K_0\cup\dots\cup K_{n-1})}$. Further, note that for each $0\leq i\leq n-1$ there exists a Peano continuum $Y_i$ such that $f(\bd K_i)\cup\{y_{i+1}\}\subseteq Y_i\subseteq B(x_{i+1},\varepsilon/2)$ since the metric we work with is convex. Hence for each $1\leq i\leq n-1$ we can find $g|_{K_i}:=K_i\to Y_i$ such that $g(\bd K_i)=f(\bd K_i)$ (a singleton) and $g(y_i):=y_{i+1}$ by Lemma \ref{LCXY}. Finally, let $g|_{K_0}:K_0\to Y_0$ be any continuous map satisfying $g(\bd K_0)=f(U)$ and $g(L_n)=\{y_1\}$, there is such a map by, i.e., a straightforward generalization of Theorem \cite[8.19]{nadler} or by a natural modification of Lemma \ref{LCXY}. 

    It is easy to see that the obtained map $g$ is well-defined and continuous. Further, $d(f',g)\leq d(f',f)+d(f,g)<\varepsilon/2+\varepsilon/2=\varepsilon$. Note that $g^n(x)=x$, and $g$ is constant on $L_n$, a closed neighborhood of $x$. It is easy to find inductively closed sets $L_{n-1}$, $L_{n-2}$, $\dots$, $L_0$ such that $g(L_{i})\subseteq \int L_{i+1}$ and $g^i(x)\in\int L_i$ for $i=n-1,n-2,\dots,0$. Let $L:=L_0\cup\dots\cup L_n$. Then $L$ is a nonempty closed proper subset of $X$ satisfying $f(L)\subseteq\int L$. Hence $f\notin \CT(X)$.
\end{proof}

\begin{lemma}\label{extension->LC}
Let $X,Y$ be continua such that
for every $y\in Y$ and every neighborhood $V$ of $y$ there is a neighborhood  $L\subseteq V$ of $y$, such that every continuous map $f:K\to L$ from a closed set $K\subseteq X$ has a continuous extension $\bar{f}:X\to V$.
Then $\LC(X,Y)$ is dense in $\C(X,Y)$.
\end{lemma}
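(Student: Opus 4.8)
The plan is to show that an arbitrary $f\in\C(X,Y)$ can be uniformly approximated by a map that is locally constant on a dense open set. Fix $f$ and $\varepsilon>0$. Using the hypothesis, for every $y\in Y$ choose a neighborhood $V_y$ of $y$ with $\diam V_y<\varepsilon$ and an associated smaller neighborhood $L_y\subseteq V_y$ with the stated extension property. The sets $f^{-1}(L_y)$ cover $X$; since $X$ is compact, and in fact a Peano continuum (so by Proposition \ref{partitions} it admits fine covers by sub-Peano-continua with pairwise disjoint interiors), I would take a finite cover $\F=\{F_1,\dots,F_k\}$ of $X$ by Peano continua with pairwise disjoint nonempty interiors, with $\mesh\F$ small enough that each $f(F_j)$ lies inside some $L_{y_j}$ (this uses uniform continuity of $f$ together with the Lebesgue number of the open cover $\{f^{-1}(\int L_y)\}$).

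The next step is the local modification. On each piece $F_j$, I want to replace $f|_{F_j}$ by a map that is locally constant on a dense open subset of $\int F_j$ while keeping it within $V_{y_j}$ of the original and agreeing with $f$ on $\bd F_j$. Concretely: inside $\int F_j$ pick a null sequence of disjoint sub-continua whose interiors are dense in $F_j$; on each such small continuum collapse the image to a single point of $L_{y_j}$; this defines a continuous map $K\to L_{y_j}$ on the closed set $K$ consisting of $\bd F_j$ together with those small continua, which then extends to a continuous $g_j:F_j\to V_{y_j}$ by the hypothesis — but I must be careful that the extension can be taken to restrict to $f$ on $\bd F_j$. To arrange this cleanly I would instead first choose $g_j$ on $\bd F_j$ to equal $f$, note $f(\bd F_j)\subseteq L_{y_j}$, extend that to all of $F_j$ inside $V_{y_j}$, and then perform the "collapsing onto points" perturbation in the interior using the flexibility of maps into $Y$ (invoking Lemma \ref{LCXY}-type constructions, or simply re-collapsing on a null sequence of interior sub-continua and re-extending), all within $V_{y_j}$. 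Because the $F_j$ agree with $f$ on their boundaries, the maps $g_j$ glue to a well-defined continuous $g:X\to Y$; since each point of $X$ lies either in some $\int F_j$ (where $g$ is locally constant on a dense open set by construction) or in $\bigcup_j\bd F_j$ (which is nowhere dense), $g\in\LC(X,Y)$. Finally $d(g,f)<\varepsilon$ because on each $F_j$ both $g$ and $f$ take values in the set $V_{y_j}$ of diameter less than $\varepsilon$.

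The main obstacle I expect is the \emph{compatibility of the extensions across boundaries}: the extension property in the hypothesis is stated for a single target space with no control on the boundary behavior of $\bar f$, so gluing the locally perturbed pieces requires that each $g_j$ coincide with the original $f$ on $\bd F_j$. The resolution is to treat $\bd F_j$ as part of the closed set $K$ on which the map is prescribed, defining it there to be exactly $f|_{\bd F_j}$ (which is legitimate since $f(\bd F_j)\subseteq L_{y_j}$), and to build the "locally constant on a dense open set" behavior only on a null sequence of sub-continua lying strictly inside $\int F_j$; the extension hypothesis then produces $g_j$ agreeing with $f$ on $\bd F_j$ and with the collapsing data in the interior, all while staying in $V_{y_j}$. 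A secondary, more routine point is to verify that the union of the interiors of the collapsing sub-continua, taken over all $j$, is dense in $X$ — which follows because within each $\int F_j$ one can choose such sub-continua with dense union, and $\bigcup_j\int F_j$ is dense in $X$.
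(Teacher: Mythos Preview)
Your proposal has a genuine gap. The core step ``pick a null sequence of disjoint sub-continua $L_n\subseteq\int F_j$ whose interiors are dense in $F_j$, collapse each $L_n$ to a point, and apply the extension hypothesis to $K=\bd F_j\cup\bigcup_n L_n$'' fails because this $K$ is \emph{not closed}: density of $\bigcup_n\int L_n$ forces $\cl K=F_j$. And you cannot simply take the closure and still have a well-defined continuous map to extend, since the map that equals $f$ on $\bd F_j$ and a constant $c_n$ on each $L_n$ has an unavoidable jump at every point of $\bd L_n$ (on one side the value is $c_n$, on the other it limits to $f(p)\neq c_n$ in general). Your fallback --- extend $f|_{\bd F_j}$ once and then ``re-collapse on a null sequence and re-extend'' --- has the same defect if done in one shot, and if done one $L_n$ at a time it \emph{is} an iterative construction with infinitely many extension steps, which is precisely what the paper does but which your write-up never carries out. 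A secondary issue is that you invoke Proposition~\ref{partitions} by declaring $X$ a Peano continuum; the lemma only assumes $X,Y$ are continua, so this is an unwarranted extra hypothesis (harmless for the applications in the paper, but it does not prove the lemma as stated).

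The paper's proof sidesteps both problems by working globally and iteratively rather than piecewise: it fixes a dense sequence $(x_n)$ in $X$, and at stage $n$ it modifies the current map $g_{n-1}$ on a single small closed set $K'$ around $x_m$, prescribing values on the genuinely closed set $\cl{U_n}\cup\bd K'$ (constant on $\cl{U_n}$, equal to $g_{n-1}$ on $\bd K'$) and invoking the extension hypothesis once to fill in $K'$. The errors are chosen summable, so the sequence converges to the desired $g\in\LC(X,Y)$. No partitioning of $X$ --- and hence no Peano assumption --- is needed.
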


\begin{proof}
Let $f\in\C(X,Y)$ be arbitrary and $\varepsilon>0$. Let $\varepsilon_n:=2^{-n}\varepsilon$. Let $x_n$, $n\in\N$, satisfy that $\{x_n,\,n\in\N\}$ is dense in $X$. We will find inductively maps $g_n\in \C(X,Y)$, $n\geq 0$, and open $U_n\subseteq X$, $n\in\N$, satisfying:
    
\begin{itemize}
        \item $x_n\in \cl{U_1}\cup\dots\cup \cl{U_n}$,
        \item $U_1,\dots, U_n$ are pairwise disjoint,
        \item $g_n|_{U_i}=g_{n-1}|_{U_i}$ for every $1\leq i <n$,
        \item $g_n|_{U_n}$ is constant,
        \item $d(g_n,g_{n-1})\leq \varepsilon_n$.
    \end{itemize}
    
    Let $g_0=f$. Assume that we already have $g_n, U_1,\dots, U_n$. Let $m\in\N$ be the least natural number satisfying $x_m\notin \cl{U_1}\cup\dots\cup \cl{U_n}$. By our assumption, there is $L\subseteq B(f(x_m),\varepsilon_n/2)$ a neighborhood of $f(x_m)$ such that partial maps from $X$ into $L$ can be continuously extended to maps into $B(f(x_m),\varepsilon_n/2)$. Clearly, there is a closed $K'\subseteq X\setminus(\cl{U_1}\cup\dots\cup \cl{U_n})$  satisfying $x_m\in \int K'$ and $g_n(K')\subseteq L$. Fix some open $U_{n+1}\subseteq X$ satisfying $x_m\in U_{n+1}\subseteq\cl{U_{n+1}}\subseteq\int K'$.

     By the assumptions of this Lemma (used with $K:=\cl{U_{n+1}}\cup\bd K'$) there is $g_{n+1}\in\C(X,Y)$ satisfying

\begin{itemize}
\item $g_{n+1}$ equals to $g_{n}$ on $\bd K'$ and thus we may assume that $g_{n+1}$ equals to $g_{n}$ on the set $X\setminus \int K'\supseteq U_1\cup\dots\cup U_n$,
\item $g_{n+1}$ is constant on $U_{n+1}$, an open neighborhood of $x_m$,
\item $g_{n+1}(K')\subseteq B(f(x_m),\varepsilon_n/2)$, hence in particular, $d(g_n,g_{n+1})\leq \varepsilon_n$.
\end{itemize}

Consequently, the sequence $g_n$ converges uniformly to some $g\in\C(X,Y)$ and the distance of $g$ and $f=g_0$ is less than $\sum\varepsilon_n=\varepsilon$.
Since $g_k$ is constant on $U_n$ for every $k\geq n$, it follows that $g$ is locally constant on the open set $U:=\bigcup_{n\in\N} U_n$. Moreover, $U$ is dense since
\[
X=\cl{\{x_n,\,n\in\N\}}\subseteq \bigcup_{n\in\N} \cl{U_n}\subseteq \cl{\bigcup_{n\in\N} U_n}=\cl{U}
\]

\end{proof}

To verify the assumptions of Lemma \ref{extension->LC} in specific cases, we will make use of the following extension result; see, for example, \cite[Theorem 6.2]{MayerOversteegenTymchatyn} or \cite[Theorem 4.2.31]{vanMill} with $n=0$.

\begin{proposition}[Dugundji, Kuratowski]\label{prop:DugundjiKuratowski}
Let $X$ be a metric space, $Y$ a Peano continuum, $K\subset X$ closed, $\dim(X\setminus K)\leq 1$. 
Then every continuous function $f\colon K\to Y$ has a continuous extension $\Bar{f}\colon X\to Y$.
\end{proposition}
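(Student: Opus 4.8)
This is the Dugundji--Kuratowski extension theorem, and the plan is to reproduce Dugundji's classical argument, using the hypothesis $\dim(X\setminus K)\le 1$ to keep the relevant combinatorics one-dimensional and the local connectedness of the Peano continuum $Y$ to keep the relevant pieces small. The quantitative input that replaces the convexity used in the usual proof is the well-known fact that a Peano continuum $Y$ is uniformly locally arcwise connected: for every $\eta>0$ there is $\rho>0$ such that any two points of $Y$ at distance less than $\rho$ are joined by an arc in $Y$ of diameter less than $\eta$ (this follows from the property S of $Y$ together with arcwise connectedness of Peano continua).

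First I would set $\delta(x)=d(x,K)>0$ for $x\in X\setminus K$ and, exactly as in Dugundji's theorem, choose a locally finite open cover $\mathcal U$ of the metrizable space $X\setminus K$ which is subordinated to $K$, meaning that each $U\in\mathcal U$ has $\diam U$ comparable to (and in particular smaller than) $\delta(x)$ for every $x\in U$. Since $\dim(X\setminus K)\le 1$, I may in addition take $\mathcal U$ to have order at most $2$, so that its nerve is a graph. Fix a partition of unity $\{\lambda_U\}_{U\in\mathcal U}$ subordinate to $\mathcal U$, and for each $U$ pick $x_U\in U$ together with a point $k_U\in K$ with $d(x_U,k_U)<2\delta(x_U)$; put $v_U:=f(k_U)\in Y$.

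Next I would build the extension. Whenever $U\cap U'\ne\emptyset$, the mesh condition together with the uniform continuity of $f$ on the compact set $K$ forces $v_U$ and $v_{U'}$ to be close, so by uniform local connectedness I may fix an arc $A_{U,U'}\subseteq Y$ from $v_U$ to $v_{U'}$ of small diameter, with a parametrization $\gamma_{U,U'}\colon[0,1]\to Y$ satisfying $\gamma_{U,U'}(0)=v_U$, $\gamma_{U,U'}(1)=v_{U'}$, chosen so that $\gamma_{U',U}(t)=\gamma_{U,U'}(1-t)$. For $x\in K$ set $\bar f(x):=f(x)$; for $x\in X\setminus K$ lying in exactly one member $U$ set $\bar f(x):=v_U$; and for $x\in X\setminus K$ lying in two members $U,U'$ set $\bar f(x):=\gamma_{U,U'}(\lambda_{U'}(x)/(\lambda_U(x)+\lambda_{U'}(x)))$. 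Because $\mathcal U$ has order at most $2$, this covers all cases, and one checks consistency (the two-member formula degenerates to the one-member formula where one of the two coordinates vanishes) and continuity of $\bar f$ on the open set $X\setminus K$, which is immediate from local finiteness.

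The main obstacle is continuity at points of $K$. Given $k\in K$ and $\eta>0$, use the uniform continuity of $f$ on $K$ and uniform local connectedness of $Y$ to pick $\rho$ so small that $\rho$-close points of $Y$ are joined by arcs of diameter less than $\eta$, and then note that $\delta(x)$, the diameters of the members of $\mathcal U$ meeting a neighborhood of $k$, and the numbers $d(x_U,k_U)$ all tend to $0$ as $x\to k$; hence for $x$ in a sufficiently small neighborhood of $k$ every vertex value $v_U$ with $x\in U$ lies within $\eta$ of $f(k)$ and every connecting arc used near $x$ has diameter less than $\eta$, so $d(\bar f(x),f(k))\le C\eta$. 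This yields continuity of $\bar f$ and finishes the proof. (Alternatively, one could embed $Y$ in the Hilbert cube, extend $f$ to a map into the Hilbert cube by the convex form of Dugundji's theorem, and then retract back into $Y$; but this reduction ultimately rests on the same uniform local connectedness, so the direct construction is cleaner.)
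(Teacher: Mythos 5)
The paper does not prove this proposition at all: it is quoted as a classical theorem of Dugundji and Kuratowski (the $\mathrm{LC}^0$/$\mathrm{C}^0$ extension theorem), with references to Mayer--Oversteegen--Tymchatyn and to van Mill's book. Your proposal reconstructs essentially the standard proof given in those sources --- a Dugundji-type locally finite cover of $X\setminus K$ subordinated to $d(\cdot,K)$, refined to order $\le 2$ using $\dim(X\setminus K)\le 1$ so that the nerve is a graph, a partition of unity mapping $X\setminus K$ into that graph, vertices sent to values of $f$ at nearby points of $K$, and edges realized by small arcs in $Y$ via uniform local arcwise connectedness. The overall architecture is correct and the continuity check at points of $K$ is the right one.

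Two points need repair, though neither is fatal. First, you twice invoke ``uniform continuity of $f$ on the compact set $K$,'' but $K$ is merely a closed subset of an arbitrary metric space and need not be compact, and $f$ need not be uniformly continuous. In particular, your claim that $U\cap U'\ne\emptyset$ forces $v_U$ and $v_{U'}$ to be close is false for members of the cover far from $K$ (where the mesh condition permits large diameters) and even near $K$ it should be derived from pointwise continuity of $f$ at the limit point $k$, via the estimate $d(k_U,k)\le C\,d(x,k)$, not from uniform continuity. Second, and relatedly, the arcs $A_{U,U'}$ must be chosen \emph{quantitatively}: fix a modulus $\omega$ with $\omega(\rho)\to 0$ witnessing uniform local arcwise connectedness of $Y$, and pick each $A_{U,U'}$ of diameter at most, say, $d(v_U,v_{U'})+\omega\bigl(d(v_U,v_{U'})\bigr)$ (taking an arbitrary arc when the endpoints are far apart --- one exists since $Y$ is arcwise connected). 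With that convention, arcs whose endpoints both lie near $f(k)$ are automatically small, which is exactly what your final continuity estimate at $k\in K$ requires; without it, ``of small diameter'' has no fixed meaning and the last paragraph does not close. With these corrections your argument is a complete and correct proof, matching the classical one.
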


\begin{theorem}\label{LCpodminka}
    Let $X$ be a Peano continuum that is either one-dimensional or an absolute neighborhood retract. Then $\LC(X)$ is dense in $\C(X)$.
\end{theorem}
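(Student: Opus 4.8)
The plan is to deduce the theorem from Lemma~\ref{extension->LC} applied with $Y=X$: it suffices to check that for every $y\in X$ and every neighborhood $V$ of $y$ there is a neighborhood $L\subseteq V$ of $y$ such that every continuous map $f\colon K\to L$ from a closed set $K\subseteq X$ admits a continuous extension $\bar f\colon X\to V$. I would verify this hypothesis separately in the two cases, and then simply invoke Lemma~\ref{extension->LC}.

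For the one-dimensional case, given $y$ and $V$, first pick $\varepsilon>0$ with $\cl{B(y,\varepsilon)}\subseteq V$ and apply Proposition~\ref{partitions} to obtain a finite cover $\F$ of $X$ by Peano continua with pairwise disjoint nonempty interiors and $\mesh\F\le\varepsilon$. I would set $L:=\bigcup\{F\in\F\colon y\in F\}$ and check that (i) $L$ is a neighborhood of $y$, since $y$ lies in the open set $X\setminus\bigcup\{F\in\F\colon y\notin F\}$, which is contained in $L$ because $\F$ covers $X$; (ii) $L\subseteq\cl{B(y,\varepsilon)}\subseteq V$; and (iii) $L$, being a finite union of Peano continua with the common point $y$, is again a Peano continuum (for instance by the Hahn--Mazurkiewicz theorem). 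Then for a closed $K\subseteq X$ we have $\dim(X\setminus K)\le\dim X\le 1$ by monotonicity of dimension on subspaces of separable metric spaces, so Proposition~\ref{prop:DugundjiKuratowski} extends any $f\colon K\to L$ to $\bar f\colon X\to L\subseteq V$, as needed.

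For the absolute neighborhood retract case I would argue homotopy-theoretically. Given $y$ and $V$: since $X$ is an ANR it is locally contractible, so there is an open neighborhood $L$ of $y$ with $L\subseteq V$ whose inclusion $\iota\colon L\hookrightarrow V$ is null-homotopic in $V$; moreover $V$, being open in the ANR $X$, is itself an ANR. Now for a closed $K\subseteq X$ and continuous $f\colon K\to L$, composing the null-homotopy of $\iota$ with $f$ shows that $\iota\circ f\colon K\to V$ is homotopic inside $V$ to a constant map $c\colon K\to\{p\}\subseteq V$. The constant map extends trivially to $\bar c\colon X\to\{p\}\subseteq V$, and the Borsuk homotopy extension theorem --- valid since $V$ is an ANR and $K$ is closed in the metric space $X$ --- lets me push this extension along the homotopy to obtain an extension $\bar f\colon X\to V$ of $\iota\circ f$. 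This verifies the hypothesis of Lemma~\ref{extension->LC}, which then yields the density of $\LC(X)$ in $\C(X)$ in both cases.

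The routine parts --- the point-set verifications in the one-dimensional case, and reversing or restricting homotopies --- are straightforward. The step that needs the most care is the ANR case, where one must invoke the precise form of local contractibility of ANRs (a neighborhood basis at $y$ of sets whose inclusion into a prescribed larger neighborhood is null-homotopic) together with the homotopy extension theorem for maps into an ANR defined on an \emph{arbitrary} closed subset of a metric space, and must use that open subsets of ANRs are ANRs so that $V$ --- which need not be compact --- is a legitimate target for these classical results.
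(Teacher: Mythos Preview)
Your proof is correct. In the one-dimensional case you follow the paper's approach exactly: both of you apply Proposition~\ref{partitions} to manufacture a small Peano subcontinuum $L\subseteq V$ that is a neighborhood of $y$, and then invoke Proposition~\ref{prop:DugundjiKuratowski} to extend any $f\colon K\to L$ over $X$. Your construction of $L$ as the union of the members of $\F$ containing $y$, together with the Hahn--Mazurkiewicz justification, is slightly more explicit than the paper's one-line appeal to Proposition~\ref{partitions}, but the content is identical.

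In the ANR case your route is genuinely different. The paper works concretely with the embedding $X\subseteq Q$ into the Hilbert cube and a retraction $r\colon U\to X$: it finds a small copy $R\cong Q$ of the Hilbert cube near $y$ with $r(R)\subseteq V$, sets $L:=r(W)$ for a suitable open $W\subseteq R$, extends any $f\colon K\to L\subseteq R$ to $\bar f\colon X\to R$ using that $R$ is an absolute retract, and then post-composes with $r$ to land in $V$. You instead argue homotopy-theoretically: local contractibility of ANRs supplies $L$ with $L\hookrightarrow V$ null-homotopic, the open set $V$ is again an ANR, and Borsuk's homotopy extension theorem transports the trivial extension of the constant map to an extension of $\iota\circ f$. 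The paper's argument is more self-contained, using only the definition of ANR adopted there and the AR property of $Q$; yours is conceptually cleaner and does not rely on the particular Hilbert-cube realization, but it imports heavier background (local contractibility of ANRs, that open subsets of ANRs are ANRs, and the homotopy extension theorem for closed pairs in metric spaces with ANR target).
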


\begin{proof}
In both cases, we will verify the assumptions of Lemma \ref{extension->LC}. Let $x\in X$ and $V$ be a neighborhood of $x$.
    
    First, assume that $X$ is one-dimensional. There is a Peano continuum $L\subseteq V$ such that $x\in\int L$ by Proposition \ref{partitions}. Let $K\subseteq X$ be closed and $f:K\to L$ continuous. Since $\dim (X\setminus K)\leq\dim X= 1$, there is $\bar{f}:X\to L$ a continuous extension of $f$ by Proposition \ref{prop:DugundjiKuratowski}.
    
    Secondly, assume that $X$ is an absolute neighborhood retract. Denote by $Q$ the Hilbert cube, i.e. the space $[0,1]^\omega$. We may assume that $X\subseteq Q$. Since $X$ is an absolute neighborhood retract, there is a neighborhood $U$ of $X$ in $Q$ and a retraction $r:U\to X$. Find $R\subseteq U$ a neighborhood of $x$ in $Q$ that is homeomorphic to $Q$ such that $r(R)\subseteq V$ (note that, indeed, $r(x)=x$ as $x\in X$). Find $W\subseteq R$ an open set (in $Q$) containing $x$ such that $r(W)\subseteq R$. Note that $r(W)$ is a neighborhood of $x$ in $X$ since it contains the open set $W\cap X$.
    
    We claim that $L:=r(W)$ has the required extension property. To verify this, let $K\subseteq X$ be closed and $f:K\to L$ continuous. Since $L=r(W)\subseteq R$, and $R$ is homeomorphic to $Q$, there is $\bar{f}:X\to R$ a continuous extension of $f$. Observe that $r\circ \bar{f}:X\to r(R)\subseteq V$ is continuous and for every $z\in K$ we have $r(\bar{f}(z))=r(f(z))$ by the choice of $\bar{f}$ and $r(f(z))=f(z)$ since $f(z)\in L=r(W)\subseteq X$.
\end{proof}

\section{Chaos on Peano continua}

\begin{lemma}\label{prop}
    Let $X$ be a Peano continuum and $f\in\LC(X)$. Suppose that $\F$ is a finite cover of $X$ formed by Peano subcontinua of $X$ with nonempty pairwise disjoint interiors such that there exists $n_0$ satisfying $f^{n_0}(F)=X$ for every $F\in\F$. Then there exists $g\in \LEO(X)\cap\DP(X)\cap \cl{\LC(X)}$ such that $d(f,g)\leq 2\cdot \mesh\F$.
\end{lemma}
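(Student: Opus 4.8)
The plan is to realize $g$ as a uniform limit of maps in $\LC(X)$, each obtained from $f$ by ``window surgery'': replacing the constant value of the current map on a small Peano subcontinuum by a controlled surjection, produced via Lemma~\ref{LCXY}. We fix a compatible convex metric on $X$ (so all balls are connected). Two features make the bookkeeping work: (i) the surgeries performed at one stage have \emph{pairwise disjoint supports}, so their perturbations do not accumulate and each is bounded by $\mesh\F$; (ii) each surgery is carried out inside a region where the current map is already constant, so the map stays in $\LC(X)$.

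\smallskip\noindent\emph{Step 1 (making $g$ locally eventually onto).} Since $f\in\LC(X)$ and $\bigcup_{F\in\F}\int F$ is dense, we choose a countable family of pairwise disjoint Peano subcontinua $L_j$ with nonempty interior, each contained in $\int F$ for a (unique) $F=F_{(j)}\in\F$ and disjoint from the remaining members, each lying in an open set on which $f$ is constant with value $c_j$, and with $\{L_j\}$ arranged so that every nonempty open set contains some $L_j$. Pick $F_j\in\F$ with $c_j\in F_j$ and, using Lemma~\ref{LCXY}, redefine $g$ on $L_j$ to be a continuous surjection onto $F_j$ that equals $c_j$ on $\bd L_j$ and is locally constant on a dense open subset of $L_j$; keep $g=f$ elsewhere. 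Then $g\in\LC(X)$, and since the $L_j$ are disjoint with $c_j\in F_j$ we get $d(f,g)\le\mesh\F$. Writing $J(F)=\{j:L_j\subseteq\int F\}$, disjointness of interiors gives $F\cap\bigcup_jL_j=\bigcup_{j\in J(F)}L_j$, hence
\[
g(F)=f\Big(F\setminus\!\!\bigcup_{j\in J(F)}\!\!L_j\Big)\cup\bigcup_{j\in J(F)}F_j\ \supseteq\ f\Big(F\setminus\!\!\bigcup_{j\in J(F)}\!\!L_j\Big)\cup\{c_j:j\in J(F)\}=f(F).
\]
Moreover, by including enough windows, for every $F'\in\F$ with $\int F'\cap f(\int F)\neq\emptyset$ we arrange $j\in J(F)$ with $c_j\in\int F'$, so that $g(F)\supseteq F'$. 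Feeding this ``one $f$-step through interiors'' reachability into the hypothesis $f^{n_0}(F)=X$, one then deduces (this is the delicate point, see below) that $g^{n_0}(F)=X$ for every $F\in\F$. Since every nonempty open $U$ contains some $L_j$, we have $g(U)\supseteq F_j$, hence $g^{n_0+1}(U)=X$; thus $g\in\LEO(X)$.

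\smallskip\noindent\emph{Step 2 (making the periodic points dense), interleaved with Step 1.} Since a map produced as above is LEO, it is chain transitive, so for every $\delta>0$ and all $x,y$ there is a $\delta$-chain from $x$ to $y$. Enumerate a dense set $\{x_k\}\subseteq X$. At the $k$-th stage, replace the current map on a finite family of pairwise disjoint windows lying in its plateaus (and disjoint from all previously used windows) so as to realize a short $\delta_k$-chain running from a point near $x_k$ back to $x_k$; this creates a periodic point in a prescribed neighborhood of $x_k$, keeps the map in $\LC(X)$, and perturbs it by at most $\delta_k$. Interleaving these stages with the window installations of Step 1 (organized so that the requirement ``$B_k$ blows up onto $X$ in the limit'' is met and then protected by making all later windows sufficiently small and local), and choosing $\sum_k\delta_k\le\mesh\F$, the sequence converges uniformly; the limit $g$ lies in $\cl{\LC(X)}\cap\LEO(X)\cap\DP(X)$ with $d(f,g)\le\mesh\F+\sum_k\delta_k\le 2\,\mesh\F$.

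\smallskip\noindent The main obstacle is the verification in Step 1 that the windows do not destroy the strong surjectivity $f^{n_0}(F)=X$, i.e. that $g^{n_0}(F)=X$. This is subtle because iterating a continuous map interacts badly with passing to subsets, and $f$ itself — being locally constant on a dense set — is highly non-expanding; the resolution combines the clean identity $g(F)\supseteq f(F)$ for members $F$, the fact that the windows blow small open sets up to full members, and a careful design of the window maps (preimage of the constant value dense, in the spirit of the Devil's staircase) together with a careful placement of the ``active'' parts of these maps, so that every orbit witnessing $f^{n_0}(F)=X$ can be rerouted through windows. A secondary technical point is ensuring, in the interleaved construction, that the perturbations of later stages do not spoil $\LEO$.
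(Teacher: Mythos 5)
Your overall strategy---window surgery inside plateaus via Lemma~\ref{LCXY}, with disjoint supports and a convergent error budget---is indeed the method used in the paper, but your execution has two genuine defects. The first is that the map $g$ produced in Step~1 is not continuous. You install a countable, pairwise disjoint family of windows $L_j$ such that \emph{every} nonempty open set contains some $L_j$, and on each $L_j$ you put a surjection onto a member $F_j$ of the \emph{fixed} finite cover $\F$. Since every ball of radius $\delta$ contains some $L_j$, there are windows of arbitrarily small diameter, yet each $g(L_j)=F_j$ has diameter at least $\min\{\diam F:F\in\F\}>0$ (members of $\F$ have nonempty interior in a nondegenerate continuum). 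So $g$ is not uniformly continuous. This is exactly why the paper never surjects onto sets of a fixed positive mesh infinitely often: it uses a refining sequence of covers $\F_n$ with $\mesh\F_n\le\varepsilon 2^{-n-1}$, installs only finitely many windows at stage $n+1$ (one per member of $\F_{n+1}$), each surjecting onto $g_n(F)\cup H_F$ with $F\in\F_{n+1}$, $H_F\in\F_n$, so that the $n$-th perturbation is at most $\varepsilon 2^{-n}$ and the sequence converges.

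The second defect is the step you yourself flag as ``the delicate point'': you never prove $g^{n_0}(F)=X$, and the route you sketch is doubtful. The relation ``$\int F'\cap f(\int F)\ne\emptyset$'' need not record the orbit segments witnessing $f^{n_0}(F)=X$ (they may run through boundaries of members), and the inclusion $g(F)\supseteq f(F)$ for members of $\F$ does not propagate to iterates, since the intermediate sets $f^k(F)$ are not unions of members. The paper sidesteps this entirely by \emph{not} keeping the exponent fixed: it proves $g_n^{\,i+n_0}(F)=X$ for $F\in\F_i$, letting the exponent grow by one with each refinement. The mechanism is that the window inside $F\in\F_{n+1}$ has image containing a whole member $H_F$ of the \emph{previous} cover (one containing the plateau value $g_n(x_F)$), so $g_{n+1}^{\,n+1+n_0}(F)\supseteq g_{n+1}^{\,n+n_0}(H_F)\supseteq g_n^{\,n+n_0}(H_F)=X$, the middle inclusion coming from the monotonicity $g_{n+1}(H)\supseteq g_n(H)$ for members $H$ of the finest cover. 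Relatedly, your Step~2 only ``realizes a $\delta_k$-chain,'' which does not produce a periodic point; the paper instead uses the already established equality $g_n^{\,n+n_0}(H_F)=X$ to pick an exact preimage $x_F'$ of $x_F$, reroutes $x_F\mapsto x_F'$ inside the window, and protects the finite orbit of $x_F'$ from all later windows, so $x_F$ is genuinely periodic for the limit. I would reorganize your construction along these lines.
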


\begin{proof}
    Put $\varepsilon:=2\cdot \mesh\F$. We may assume $\varepsilon\in (0,\infty)$ since $\LC(X)\ne\emptyset$. 
    We will construct by induction with respect to $n\geq 0$:
\begin{itemize}
    \item $\F_n$: a finite cover of $X$ formed by Peano subcontinua of $X$ with nonempty pairwise disjoint interiors,
    \item $g_n\in\LC (X)$,
    \item $S_n$: a finite subset of $X$,
\end{itemize}

such that:
\begin{enumerate}
    \item\label{prop1} $d(g_n,g_{n+1})\leq\varepsilon \cdot 2^{-n}$,
    \item\label{prop2} $g_n^{i+n_0}(F)=X$ for every $F\in \F_i$ and $0\leq i\leq n$,
    \item\label{prop3} $\mesh\F_n\leq\varepsilon\cdot 2^{-n-1}$,
    \item\label{prop4} $\F_{n+1}$ refines $\F_n$,
    \item\label{prop5} $g_n(S_n)=S_n$ (and thus points in $S_n$ are periodic under $g_n$),
    \item\label{prop6} $S_n\subseteq S_{n+1}$ and $g_{n+1}|_{S_n}=g_n$,
    \item\label{prop7} if $n\geq 1$, then $S_n\cap F\ne \emptyset$ for every $F\in \F_n$.
\end{enumerate}

Let $\F_0:=\F$, $g_0:=f$ and $S_0:=\emptyset$; clearly these satisfy the induction hypotheses. Assume that we have already defined $S_n, g_n$, $\F_n$ and we will find $S_{n+1},g_{n+1}$, $\F_{n+1}$. There exists $0<\delta<\varepsilon\cdot 2^{-n-2}$ such that whenever $A\subseteq X$ is of diameter $<\delta$ then $g_n(A)$ is of diameter $<\varepsilon\cdot 2^{-n-1}$. By Proposition \ref{partitions} we can find $\F_{n+1}$ a finite cover of $X$ formed by Peano subcontinua of $X$ of diameter $<\delta$ with nonempty disjoint interiors refining $\F_n$.    

For every $F\in\F_{n+1}$ choose a point $x_F\in int (F)\setminus S_n$ such that $g_n$ is constant on a neighborhood of $x_F$ and $H_F\in\F_n$ satisfying $g_n(x_F)\in H_F$. By the inductive hypotheses we have $g_n^{n+n_0}(H_F)=X$ and thus there is $x_F'\in H_F$ such that $g_n^{n+n_0}(x_F')=x_F$. Put 
\[
S_{n+1}:= S_n\cup \{g_n^i(x_F');\,0\leq i\leq n+n_0,F\in\F_{n+1}\}.
\]

For every $F\in\F_{n+1}$ fix some Peano continuum $F'$ such that 
\[
x_F\in \int F'\subseteq F'\subseteq (\int (F)\setminus S_{n+1})\cup\{x_F\}
\]
and $g_n$ is constant on $F'$. Finally, let $g_{n+1}|_{F'}\in \LC (F', g_n(F)\cup H_F)$ be any surjective map satisfying $g_{n+1}(bd(F'))=g_n(bd(F'))$ (a singleton) and $g_{n+1}(x_F)=x_F'$, there exists some by Lemma \ref{LCXY}. Let $g_{n+1}(x):=g_n(x)$ for every $x\in X\setminus \bigcup \{F';\,F\in \F_{n+1}\}$.

To justify that the inductive hypotheses are indeed satisfied, we will only verify properties \ref{prop1} and \ref{prop2}, as checking the other hypotheses is straightforward. To verify \ref{prop1}, it suffices to observe that
\[
d(g_n(x),g_{n+1}(x))\leq \diam g_n(F) + \diam H_F\leq \varepsilon\cdot 2^{-n-1} + \varepsilon\cdot 2^{-n-1} = \varepsilon\cdot 2^{-n}
\]
if $x\in F'$ for some $F\in\F_{n+1}$, and $g_n(x)=g_{n+1}(x)$ otherwise. To verify \ref{prop2}, first fix $0\leq i\leq n$ and $F\in \F_i$. There is $\H\subseteq \F_{n+1}$ such that $F=\bigcup \H$ since $\{\F_i\}_{0\leq i \leq n+1}$ is a refining sequence of covers by continua with pairwise disjoint interiors. Thus, as $g_{n+1}(H)\supseteq g_{n}(H)$ for every $H\in \F_{n+1}$ by construction, we obtain that
\begin{equation}\label{magnifying}
    g_{n+1}(F) = g_{n+1}\left(\bigcup \H\right) =\bigcup \{g_{n+1}(H);H\in\H\} \supseteq \bigcup \{g_{n}(H);H\in\H\}  = g_n(F).
\end{equation}

Let $0\leq i\leq n+1$ and $F\in \F_i$. To prove that $g_{n+1}^{i+n_0}(F)=X$, we will distinguish cases $i\leq n$ and $i=n+1$. If $i\leq n$, then (\ref{magnifying}) and inductive hypotheses give $g_{n+1}^{i+n_0}(F) \supseteq g_n^{i+n_0}(F)=X$. Suppose that $i=n+1$ and recall that we have $g_{n+1}(F)\supseteq H_F$, where $H_F\in\F_n$ has been established during the construction. Therefore
\[
g_{n+1}^{n+1+n_0}(F) = g_{n+1}^{n+n_0}(g_{n+1}(F)) \supseteq g_{n+1}^{n+n_0}(H_F) \supseteq g_n^{n+n_0}(H_F)=X,
\]
where the last inequality, resp. the last equality, follows by (\ref{magnifying}), resp. by the inductive hypotheses.
Let $g$ be the limit of $g_n$, clearly $g\in\cl{\LC(X)}$ as $g_n\in\LC(X)$ for every $n$. By \ref{prop1} $g$ is well-defined and continuous, and moreover $d(g,g_0)\leq \varepsilon$. Thus $g_0=f$ entails $d(f,g)\leq\varepsilon=2\cdot \mesh\F$. By \ref{prop3} and \ref{prop7}, $\bigcup_{n\in\N} S_n$ is dense in $X$. By \ref{prop5} and \ref{prop6}, every point of $\bigcup_{n\in\N} S_n$ is a periodic point of $g$. Therefore $g\in \DP(X)$.
Let $ U\subseteq X$ be a nonempty open set, there exists $n\in \N$ and $F\in\F_n$ such that $F\subseteq U$. By \ref{prop2}, $g_k^{n+n_0}(F)=X$ for every $k\geq n$ and therefore $g^{n+n_0}(F)=X$ by the compactness of $F$. Thus, $g^{n+n_0}(U)\supseteq g^{n+n_0}(F)=X$. Hence $g\in \LEO (X)$, which conludes the proof.
\end{proof}

\begin{theorem}\label{existenceofexactDevaney}
Every Peano continuum admits a LEO selfmap with a dense set of periodic points.
\end{theorem}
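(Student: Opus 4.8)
The plan is to read this theorem directly off Lemma~\ref{prop}. What that lemma needs as input is a single map $f\in\LC(X)$ together with a finite cover $\F$ of $X$ by Peano subcontinua with nonempty pairwise disjoint interiors and some $n_0\in\N$ with $f^{n_0}(F)=X$ for every $F\in\F$; it then returns a map $g\in\LEO(X)\cap\DP(X)$ (in fact $g\in\cl{\LC(X)}$), which is exactly what is asserted. So the whole task reduces to exhibiting one such pair $(f,\F)$. I would first dispose of the degenerate case: if $X$ is a single point, then $id_X$ is trivially LEO and has a dense set of periodic points, so from now on I assume $X$ is a nondegenerate Peano continuum.

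The key point is that the coarsest conceivable cover already suffices. Take $\F:=\{X\}$; its unique member is a Peano subcontinuum of $X$ with nonempty interior, and the "pairwise disjoint interiors" requirement is vacuous. By Lemma~\ref{LCXY}, applied with target equal to $X$, with nowhere dense closed set $\emptyset$, and with no prescribed point values, there is a surjective map $f\in\LC(X)$. Then $f^{1}(X)=X$, so the hypotheses of Lemma~\ref{prop} hold with $n_0:=1$, and the $g$ it produces is the desired LEO selfmap of $X$ with a dense set of periodic points. (One could equally well run the same argument over a fine cover $\F$ obtained from Proposition~\ref{partitions}, which is what one will want to do for the approximation statements of Theorem~\ref{thmmain3}, but for the bare existence claim it is unnecessary.)

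Finally, a remark on where the actual work sits, since at the level of this statement there is essentially nothing left to do. The substance has been front-loaded into Proposition~\ref{partitions} (subdividing a Peano continuum into arbitrarily small Peano pieces with disjoint interiors), Lemma~\ref{LCXY} (producing flexible surjections inside $\LC$), and above all Lemma~\ref{prop}, whose nested-cover "magnification" induction is what simultaneously forces the limit map to be LEO --- each small piece of each refining cover eventually engulfs $X$ --- and to have dense periodic points, via the planted finite sets that meet every member of the cover at the corresponding stage. If one tried to build such a $g$ from scratch, the main obstacle would be precisely the tension between these two demands: being LEO requires expanding every open set, having dense periodicity requires exact returns, and the only comfortable way to reconcile them is to work inside the flexible class $\LC$ while carrying along the bookkeeping of refining covers and previously fixed periodic orbits, exactly as Lemma~\ref{prop} does. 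I would not expect to improve on that route.
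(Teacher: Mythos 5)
Your proposal is correct and is essentially identical to the paper's own proof: apply Lemma~\ref{LCXY} to get a surjective $f\in\LC(X)$ and then Lemma~\ref{prop} with $\F=\{X\}$ and $n_0=1$. The only (harmless) addition is your explicit treatment of the degenerate case, which the paper leaves implicit.
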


\begin{proof}
    Let $X$ be a Peano continuum. By Lemma \ref{LCXY} there is a locally constant surjective map $f:X\to X$. By Lemma \ref{prop} (use the cover $\F:=\{X\}$ with $n_0:=1$) the continuum $X$ admits a LEO selfmap with a dense set of periodic points.
\end{proof}

\begin{theorem}\label{LeoDPdense}
     For every Peano continuum $X$ it holds that $\LEO(X)\cap \DP(X) \cap\cl{(\LC(X))}$ is dense $\CT(X)\cap \cl{(\LC(X))}$.
    \end{theorem}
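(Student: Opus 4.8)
The plan is to reduce the statement to Lemma~\ref{prop} by means of a chain‑realizing (``window'') perturbation, in the spirit of the construction in Proposition~\ref{CTnowheredense}. Fix $f\in\CT(X)\cap\cl{\LC(X)}$ and $\varepsilon>0$; we may assume $X$ is nondegenerate and, by \cite[Theorem 8]{Bing}, that the metric $d$ is convex. Fix a small $\eta>0$ (a suitable fraction of $\varepsilon$, to be specified at the end). Choose $f'\in\LC(X)$ with $d(f,f')<\eta/2$. Since $X$ is connected, $f$ is chain mixing by \cite[Corollary 14]{RichesonWiseman}, so there is $N\in\N$ such that for all $n\geq N$ and all $x,y\in X$ there is an $(\eta/2)$-chain of $f$ from $x$ to $y$ of length exactly $n$; after adding $d(f,f')$, each such chain is an $\eta$-chain of $f'$.

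Now I would fix the combinatorial skeleton. By Proposition~\ref{partitions} take a finite cover $\F=\{F_1,\dots,F_k\}$ of $X$ by Peano subcontinua with nonempty pairwise disjoint interiors (which, inspecting the proof, we may take regular closed), chosen so small that $\mesh\F<\eta$ and that $f'$ sends every set of diameter at most $\mesh\F$ to a set of diameter less than $\eta$. In each $\int F_a$ pick a point $x_a$ at which $f'$ is locally constant and put $p_a:=f'(x_a)$; then $f'(F_a)\subseteq B(p_a,\eta)$. For each ordered pair $(a,b)$ fix an $\eta$-chain $x_a=z^{ab}_0,\dots,z^{ab}_N=x_b$ of $f'$, and for each $s$ choose an index $c(a,b,s)$ with $z^{ab}_s\in F_{c(a,b,s)}$, taking $c(a,b,0)=a$ and $c(a,b,N)=b$. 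If $c(a,b,s)=c$ then $z^{ab}_s\in F_c$, so $d(p_c,z^{ab}_{s+1})\le d\bigl(f'(x_c),f'(z^{ab}_s)\bigr)+d\bigl(f'(z^{ab}_s),z^{ab}_{s+1}\bigr)<2\eta$, whence $F_{c(a,b,s+1)}\subseteq B(p_c,3\eta)$. Therefore for each $c$ one can choose a Peano continuum $Y_c$ with $\{p_c\}\cup\bigcup\{F_{c(a,b,s+1)} : c(a,b,s)=c\}\subseteq Y_c\subseteq\cl{B(p_c,3\eta)}$: join each of the finitely many relevant pieces to $p_c$ by an arc provided by convexity and take the union, which is a connected finite union of Peano continua, hence a Peano continuum.

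Next I would define the perturbation $\tilde f$. For each $a$ pick a regular closed Peano subcontinuum $F_a'$ with $x_a\in\int F_a'\subseteq F_a'\subseteq\int F_a$ on which $f'$ is constantly $p_a$. Let $\tilde f:=f'$ on $X\setminus\bigcup_a F_a'$, and let $\tilde f|_{F_a'}\in\LC(F_a',Y_a)$ be a surjection with $\tilde f(\bd F_a')=\{p_a\}$, which exists by Lemma~\ref{LCXY} (here regular closedness guarantees that $\bd F_a'$ is nowhere dense in $F_a'$). As in Proposition~\ref{CTnowheredense}, it is routine that $\tilde f$ is a well-defined continuous map in $\LC(X)$ and that $d(f',\tilde f)\leq 3\eta$, since $f'\equiv p_a$ on $F_a'$ while $\tilde f(F_a')=Y_a\subseteq\cl{B(p_a,3\eta)}$. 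The key point is the covering property $\tilde f^{\,s}(F_a)\supseteq F_{c(a,b,s)}$ for all $a,b$ and $0\le s\le N$, proved by induction on $s$: the case $s=0$ is $F_a=F_{c(a,b,0)}$, and the inductive step uses $\tilde f(F_c)\supseteq\tilde f(F_c')=Y_c\supseteq F_{c(a,b,s+1)}$ for $c:=c(a,b,s)$. Taking $s=N$ gives $\tilde f^{\,N}(F_a)\supseteq F_b$ for every $b$, hence $\tilde f^{\,N}(F_a)=X$ for every $a$. Thus $\tilde f\in\LC(X)$ and $\F$ satisfy the hypotheses of Lemma~\ref{prop} with $n_0:=N$, which yields $g\in\LEO(X)\cap\DP(X)\cap\cl{\LC(X)}$ with $d(\tilde f,g)\leq 2\mesh\F<2\eta$. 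Then $d(f,g)\le d(f,f')+d(f',\tilde f)+d(\tilde f,g)<\eta/2+3\eta+2\eta$, and choosing $\eta:=\varepsilon/6$ at the outset makes this less than $\varepsilon$. Since $\LEO(X)\cap\DP(X)\cap\cl{\LC(X)}\subseteq\CT(X)\cap\cl{\LC(X)}$ and $f,\varepsilon$ were arbitrary, this proves density.

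I expect the main obstacle to be conceptual rather than computational: one must recognize what the hypothesis of Lemma~\ref{prop} really asks for, namely a single exponent $n_0$ after which \emph{every} member of a fine cover spreads onto the whole space. Realizing one chain, or even a spanning cycle of pieces, only produces a combinatorial cycle among the pieces and does not suffice; the resolution is that chain transitivity on a connected continuum upgrades to chain mixing, which supplies $\eta$-chains of one common length $N$ between all the marked points $x_a$ simultaneously, and it is exactly this uniformity that forces $\tilde f^{\,N}(F_a)=X$ for all $a$ at once. The remaining work — taking all the auxiliary subcontinua regular closed so that Lemma~\ref{LCXY} applies on their boundaries, verifying that $\tilde f$ is continuous and lies in $\LC(X)$, and bookkeeping the constants so the total perturbation stays below $\varepsilon$ — is routine and parallels Proposition~\ref{CTnowheredense}.
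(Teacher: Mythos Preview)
Your proof is correct and follows essentially the same approach as the paper: approximate $f$ by an $\LC$ map, use chain mixing (via \cite[Corollary 14]{RichesonWiseman}) to obtain a uniform chain length, perform a window perturbation on small patches so that each member of a fine cover $\F$ spreads onto $X$ in $n_0$ steps, and then invoke Lemma~\ref{prop}. The only cosmetic difference is that the paper first defines the target set $Y_F$ as the union of \emph{all} pieces of $\F$ within $\varepsilon/6$ of $f(F)$ and afterwards checks that any $\varepsilon/6$-chain is realized, whereas you fix the chains first and let $Y_c$ contain exactly the next-step pieces; both organizations work and the estimates match.
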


\begin{proof}
    We may assume that $X$ is nondegenerate. Fix $d$ some compatible convex metric on $X$, there is some by \cite[Theorem 8]{Bing}. Let $f\in \CT(X)\cap \cl{(\LC(X))}$ and $\varepsilon>0$, we will find $g\in \LEO(X)\cap \DP(X)\cap\cl{(\LC(X))}$ satisfying $d(f,g)<\varepsilon$. Since $f\in \cl{(\LC(X))}$, there is $g_0\in \LC(X)$ such that $d(f,g_0)<\varepsilon/6$. There exists $0<\delta<\varepsilon/6$ such that whenever $A\subseteq X$ is of diameter $<\delta$ then $f(A)$ is of diameter $<\varepsilon/6$.
    
    By Proposition \ref{partitions} there is $\F$ a finite cover of $X$ formed by Peano subcontinua of $X$ with nonempty disjoint interiors satisfying $\mesh \F<\delta$. For every $F\in\F$, let \[Y_F = \bigcup \{H\in\F;\,d(f(F),H)<\varepsilon/6\},\] note that this set is connected since $d$ is convex, and hence a Peano continuum. Fix some Peano continuum $F'$ with a nonempty interior such that $F'\subseteq \int (F)$, and $g_0$ is constant on $F'$. Let $g_1|_{F'}\in\LC(F', Y_F)$ be any surjective map satisfying $g_1(bd(F')=g_0(bd(F')$ (a singleton), there exists some by Lemma \ref{LCXY} as $g_0(bd(F'))\subset Y_F$. Let $g_1(x):=g_0(x)$ for every $x\in X\setminus \bigcup \{F';\,F\in \F\}$. Clearly $g_1\in \LC(X)$ and moreover $d(f,g_1)<\varepsilon/2$ since for every $x\in X$ we have either
    \[d(f(x), g_1(x))\leq \diam f(F)+\varepsilon/6+\mesh \F\leq \varepsilon/6+\varepsilon/6+\delta\leq \varepsilon/2\]
    if $x\in F'$ for some $F\in \F$, or $g_1(x)=g_0(x)$ together with $d(f,g_0)<\varepsilon/6$ if no such $F\in\F$ exists.
    
    We claim that there exists $n_0 \in \N$ such that $g_1^{n_0}(F)=X$ for every $F\in \F$. Since $f\in\CT(X)$, by \cite[Corollary 14]{RichesonWiseman} there is $n_0\in\N$, such that for every $n \in\N$, $n\geq n_0$ and for every $x,y\in X$ there exists an $\varepsilon/6$-chain from $x$ to $y$ of length exactly $n$. Fix $F,H\in \F$. We can find  an $\varepsilon/6$-chain $x_0,\dots,x_{n_0}$ such that $x_0\in F$ and $x_{n_0}\in H$. Put $F_0:=F$, $F_{n_0}:=H$ and for every $2\leq i\leq n_0-1$ fix any $F_i\in\F$ satisfying $x_i\in F_i$. If $1\leq i\leq n_0$, observe that $F_i\subseteq Y_{F_{i-1}} =g_1(F_{i-1})$ since
    \[
    d(F_i,f(F_{i-1}))\leq d(x_i,f(x_{i-1}))<\varepsilon/6.
    \]

     It follows easily by induction that $g_1^{n_0}(F_0)\supseteq F_{n_0}$, in other words, $g_1^{n_0}(F)\supseteq H$. Since $F,H\in \F$ were arbitrary and $\F$ is a cover, we obtain that $g_1^{n_0}(F)=X$ for every $F\in\F$. Thus by Lemma \ref{prop} there exists $g\in \LEO(X)\cap\DP(X)\cap\cl{\LC(X)}$ such that $d(g,g_1)\leq2\cdot\mesh \F<2\cdot \varepsilon/6<\varepsilon/2$, hence $d(f,g)\leq d(f,g_1)+d(g_1,g)<\varepsilon$.

\end{proof}

\section{Mixing is generic among chain transitive maps}

Throughout the whole section, we will assume that $X$ is a fixed nondegenerate Peano continuum and $d$ is a fixed compatible convex metric on $X$ (every Peano continuum admits such a metric by \cite[Theorem 8]{Bing}). Further, we fix $\{\H_n\}_{n\in\N}$ a refining sequence of finite covers of $X$ formed by Peano continua with pairwise disjoint nonempty interiors such that $\mesh \H_n< 2^{-n}$ for every $n\in\N$, such a sequence can be constructed easily by the inductive usage of Proposition \ref{partitions}. Thus, under the described setup, we denote for $n\in\N$
\[
G_n:=\{f\in \C(X);\,\forall F,H\in\H_n \exists k_0 \forall k\geq k_0:\,f^k(F)\cap H \ne \emptyset \}.
\]

Note that $G_m\subseteq G_k$ whenever $m\geq k$ since $\{\H_n\}_{n\in\N}$ is a  refining sequence of covers.

\begin{lemma} \label{Gisgeneric}
    For every $n\in\N$ it holds that $\int (G_n\cap \CT(X)\cap \cl{\LC(X)})$ is dense in  $\CT(X)\cap \cl{LC(X)}$, where the interior is taken with respect to the subspace topology on $\CT (X)\cap \cl{\LC(X)}$.
\end{lemma}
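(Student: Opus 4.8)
The plan is to show that every $f\in\CT(X)\cap\cl{\LC(X)}$ can be uniformly approximated, within $\CT(X)\cap\cl{\LC(X)}$, by a map that lies in the \emph{interior} of $G_n\cap\CT(X)\cap\cl{\LC(X)}$. The natural candidates for such approximants are the exact maps produced by Theorem \ref{LeoDPdense}: given $f$ and $\varepsilon>0$, that theorem yields $g\in\LEO(X)\cap\DP(X)\cap\cl{\LC(X)}$ with $d(f,g)<\varepsilon/2$. Since $g$ is LEO, it is in particular mixing (indeed $\LEO(X)\subseteq\M(X)$), so for every $F,H\in\H_n$ there is $k_0$ with $g^k(F)\cap H\neq\emptyset$ for all $k\geq k_0$; as $\H_n$ is finite, one common $k_0$ works for all pairs, so $g\in G_n$. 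It remains to check that $g$ is not merely in $G_n$ but in the relative interior of $G_n\cap\CT(X)\cap\cl{\LC(X)}$, i.e.\ that every map in $\CT(X)\cap\cl{\LC(X)}$ sufficiently $d$-close to $g$ still lies in $G_n$.

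The key point is that LEO is a \emph{robust} property in the following quantitative sense. Fix $F,H\in\H_n$. Because $H$ has nonempty interior, pick a small Peano subcontinuum $H_0\subseteq\int H$ and an open set $W$ with $H_0\subseteq W\subseteq\cl W\subseteq\int H$. Since $g$ is LEO, there is $k_0=k_0(F,H)$ such that $g^k(F)=X$ for all $k\geq k_0$; in particular $g^k(F)\supseteq\cl W$ for all $k\geq k_0$. I would like to argue that if $h$ is uniformly close to $g$ then $h^k(F)$ still meets $H$ for all $k\geq k_0$. A clean way to get this is to use a covering/fattening argument on the finitely many iterates $g,g^2,\dots,g^{k_0}$: choose $\delta>0$ so small that the $\delta$-neighborhood of $g^j$(applied to any set) is controlled, and use the fact that $g$ maps $F$ \emph{onto} $X$ in $k_0$ steps with room to spare, so a $\delta$-perturbation $h$ of $g$ still has $h^{k_0}(F)$ dense enough — more precisely, $h^{k_0}(F)$ is a connected set (as the image of a connected set, using that $X$ has a convex metric and $F$ is a continuum) that is $C\delta$-dense in $X$ for a constant $C=C(k_0,g)$, hence meets $H_0\subseteq\int H$ once $C\delta$ is smaller than the distance from $H_0$ to $\partial H$. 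Taking $\delta$ uniformly small enough to handle all finitely many pairs $F,H\in\H_n$ and all the required iterate-lengths gives a single $d$-ball around $g$ contained in $G_n$; intersecting with $\CT(X)\cap\cl{\LC(X)}$ shows $g$ is a relative interior point. Since $d(f,g)<\varepsilon/2<\varepsilon$ and $\varepsilon$ was arbitrary, density follows.

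The main obstacle is the robustness step: showing that closeness of $h$ to the LEO map $g$ forces $h^k(F)$ to remain ``almost onto'' for \emph{all} $k\geq k_0$, not just for $k=k_0$. For $k$ near $k_0$ this is a direct finite perturbation estimate; for large $k$ one must propagate the conclusion forward. The trick is that once one knows $h^{k_0}(F)$ meets a full ``brick'' $H'\in\H_n$ for \emph{every} $H'$ — equivalently $h^{k_0}(F)$ is $\mesh\H_n$-dense and connected — one can feed this back: pick any target $H$, note $h^{k_0}(F)$ contains a point of some $H'$ from which, again by the perturbation estimate applied to $h,\dots,h^{k_0}$ starting from (a subcontinuum of) $H'$, the set $h^{2k_0}(F)\supseteq h^{k_0}(H')$ is again $\mesh\H_n$-dense, and so on; for a general $k\geq k_0$ write $k=qk_0+r$ with $0\le r<k_0$ and handle the leftover $r$ steps by the same finite estimate. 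This bootstrapping is where the convexity of the metric (guaranteeing connectedness of images, so that ``meets every brick'' upgrades to a genuine density statement) and the finiteness of $\H_n$ do the real work. Once this is in place, the rest of the argument is bookkeeping, and the density claim — hence the lemma — follows.
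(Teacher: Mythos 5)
Your approach has a genuine gap at the step you yourself flag as ``the main obstacle'': the propagation of the covering relation to all $k\geq k_0$. For a fixed $k_0$ the finite perturbation estimate does give that $h^{k_0}(F)$ is a continuum that is $C\delta$-dense in $X$, but that is not enough to restart the argument. Your bootstrap requires applying the perturbation estimate ``starting from (a subcontinuum of) $H'$'', and the only sets $S$ for which you know $g^{k_0}(S)=X$ (the fact the estimate leans on) are sets with nonempty interior, e.g.\ the bricks of $\H_n$. A $C\delta$-dense continuum need not contain any brick, nor have nonempty interior (think of a dense grid of arcs in a square), so you cannot conclude anything about $g^{k_0}$ applied to a subcontinuum of $h^{k_0}(F)\cap H'$, and the errors then accumulate without bound as $k\to\infty$. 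More fundamentally, the premise that an arbitrary LEO map $g$ supplied by Theorem~\ref{LeoDPdense} is automatically a relative interior point of $G_n$ is unjustified: LEO is not an open property, and a nearby map in $\CT(X)\cap\cl{\LC(X)}$ may, for instance, collapse a brick $F$ onto a point whose actual forward orbit is not dense (chain transitivity constrains pseudo-orbits, not orbits), destroying $f^k(F)\cap H\neq\emptyset$ for large $k$.

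The paper resolves exactly this difficulty by \emph{not} taking a generic LEO approximant but constructing a bespoke $g$ with a self-correcting ``window'' mechanism: on a small set $U_H$ inside each brick $H$, $g$ factors as $t_H\circ s_H$ through an interval, arranged so that any continuum in $U_H$ crossing from $B(x_H,\xi)$ to $B(y_H,\xi)$ is mapped onto a set containing both $x_{H'}$ and $y_{H'}$ and contained in $\cl{B(x_{H'},3\xi)}$; a $\xi$-perturbation then still lands inside $U_{H'}$ and still crosses the two $\xi$-balls (since their separation $3\xi$ dominates the perturbation). This crossing property is an invariant that is restored exactly at every step, so it propagates for arbitrarily many iterations with no error accumulation, and it holds for \emph{every} $h\in B(g,\xi)$, which is what membership of the whole ball in $G_n$ requires. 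Your plan is missing this robust spreading device; without it, or some substitute invariant that survives perturbation uniformly in the number of iterates, the density-of-interior claim does not follow from approximation by LEO maps alone.
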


\begin{proof}
Let $n\in \N$, $f\in \CT(X)\cap \cl{\LC(X)}$ and $\varepsilon>0$. We will find $g\in \C(X)$, $\xi>0$ such that
\[
\emptyset\ne B(g,\xi)\cap\CT(X)\cap \cl{\LC(X)} \subseteq G_n\cap \CT(X)\cap \cl{\LC(X)}\cap B(f,\varepsilon).
\]
There exists $0<\delta<\min\{\varepsilon/4, \diam (X)\}$ such that if $A\subseteq X$, $\diam A<\delta$, then $\diam f(A)<\varepsilon/4$. By possibly making  $n$ larger we may assume that $2^{-n}<\delta$. There is $h\in \LC(X)$ satisfying $d(f,h)<\varepsilon/4$.

For every $H\in \H_n$ fix a nonempty open $U_H\subseteq H$ such that $h$ is constant on $U_H$ and a point $x_H \in U_H$. After repeating this process for every $H\in\H_n$, we can fix $0<\xi<\varepsilon/4$ satisfying that $B(x_H,4\xi)\subseteq U_H$ for every $H\in \H_n$. For every $H\in \H_n$ fix a point $y_H\in U_H$ satisfying $d(x_H,y_H)= 3\xi$. By Proposition \ref{partitions} there is $\F$ a finite cover of $X$ refining $\H_n$ formed by Peano continua with pairwise disjoint nonempty interiors such that $\mesh \F< \xi/2$.

We will proceed with the construction independently for each $H\in\H_n$, so fix $H\in\H_n$. Let 
\[\H_H := \{F\in\H_n;\,d(f(H),F)<\varepsilon/4\}.\]   
For every $F\in\F$, $F\subseteq H\setminus U_H$, fix a Peano continuum $P_F\subseteq\int F$ with nonempty interior such that $h$ is constant on $P_F$. By Lemma \ref{LCXY} there is $p_F\in \LC (P_F,\bigcup H_H)$ that is surjective and satisfies $p_F(\bd (P_F))=h(P_F)$ (a singleton).
 Fix (necessarily distinct) points $z_F\in \int F\cap U_H$ for every $F\in F$ satisfying $ F\cap U_H\ne\emptyset$. Let $s_H:\cl{U_H} \to [0,3]$ be any continuous map such that

\begin{itemize}
    \item $s_H(B(x_H,\xi))\subseteq [2,3]$,
    \item $s_H(B(y_H,\xi))\subseteq [0,1]$,
   \item  $s_H(\bd U_H)=\{0\}$,
   \item the points $s_H(z_F)$, $F\in \F$, $F\cap U_H\ne\emptyset$, are distinct,
   \item $|s_H(W)|>1$ for every $W$ a neighborhood of $z_F$ for every $F\in F$ satisfying  $F\cap U_H\ne\emptyset$.
\end{itemize}
   It is easy to see that a map satisfying all but the last condition exists using the Tietze extension theorem. The resulting map can be fixed by being altered on small neighborhoods of $z_F$ using the map $d(-,z_F)$, if necessary.
    
Let $t_H\in\LC([0,3], \bigcup \H_H)$ be any continuous map satisfying
\begin{itemize}
    \item $\{t_H(0)\}=h(U_H)$,
    \item for every $H'\in\H_H$ there is an interval $I_{H,H'}\subseteq [1,2]$ such that $x_{H'},y_{H'} \in t_{H}(I_{H,H'})\subseteq \cl{B(x_{H'},3\xi)}$,
    \item for every $F\in \F$  satisfying  $ F\cap U_H\ne\emptyset$, there is an interval $J_{F}\subseteq [0,3]$ such that $s_H(F)\supseteq J_F$ and $t_H(J_F)\supseteq \bigcup \H_H$.
\end{itemize}
Note that this is possible since $d$ is convex and $h(U_H)\in \bigcup \H_H$ as $d(f,h)<\varepsilon/4$; to see that it is indeed possible to find $t_H$ in $\LC$ it may be convenient to use Lemma \ref{LCXY}. After repeating the process for every $H\in\H_n$ we can finally define $g:X\to X$:
     \begin{equation}
    g(x) :=
    \begin{cases}
      (t_H \circ s_H)(x) & \text{if } x\in \cl{U_H}, H\in\H_n, \\
      p_F(x)           & \textbf{if }  x \in P_F, F\in \F,F\subseteq H\setminus U_H, H\in\H_n, \\
      h(x)        & \text{otherwise.} 
    \end{cases}
  \end{equation}
     
Clearly $g$ is well-defined and it is continuous since for every $H\in\H_n$ and every $F\in\F$, $F\subseteq H\setminus U_H$, we have $(t_H \circ s_H)(\bd U_H)=t_H(0)=h(U_H)$ and $p_F(\bd (P_F))=h(P_F)$ by our choices. We will prove that $g$, $\xi$ have the required properties in a series of claims.

\begin{claim*}
    $B(g,\xi) \subseteq  B(f,\varepsilon)$.
\end{claim*}

\begin{proofofclaim}
    Let $x\in X$, we find $H\in\H_n$ satisfying $x\in H$. By the construction $g(x)\in\bigcup \H_H$, therefore  the definition of $\H_H$ and $f(x)\in f(H)$ give
    \[d(f(x), g(x))\leq \diam f(H)+\varepsilon/4+\mesh \H_n\leq \varepsilon/4+\varepsilon/4+\delta\leq 3\varepsilon/4.\]

     Thus $\xi<\varepsilon/4$ concludes the proof.
\end{proofofclaim}

Since $f\in\CT(X)$, by \cite[Corollary 14]{RichesonWiseman} there is $n_0\in\N$ such that for every $n \in\N$, $n\geq n_0$, and for every $x,y\in X$ there exists an $\varepsilon/4$-chain from $x$ to $y$ of length exactly $n$.
    
\begin{claim*}
    $\emptyset\ne B(g,\xi)\cap\CT(X)\cap \cl{\LC(X)}.$
\end{claim*}

\begin{proofofclaim}
    First, we show that $g\in \LC(X)$. Let $U\subseteq U_H$ for some $H\in\H_n$ be nonempty and open, by possibly making it smaller we may assume that $U$ is connected since $X$ is Peano. If $|s_H(U)|=1$ then clearly $g$ is constant on $U$. Otherwise it is a nondegenerate interval and thus $t_H$ is constant on an open set $V\subseteq s_H(U)$. Thus $g$ is constant on an open nonempty set $s_H^{-1}(V)\cap U$. Hence we conclude that $g\in \LC(X)$ since checking the other cases is straightforward.

    Fix $F,H\in \F$. We can find  an $\varepsilon/4$-chain $x_0,\dots,x_{n_0}$ such that $x_0\in F$ and $x_{n_0}\in H$ by the choice of $n_0$. Put $F_0:=F$, $F_{n_0}:=H$ and for every $2\leq i\leq n-1$ fix any $F_i\in\F$ satisfying $x_i\in F_i$. If $1\leq i\leq n_0$, observe that $F_i\subseteq g(F_{i-1})$ since $d(F_i,f(F_{i-1}))\leq d(x_i,f(x_{i-1}))<\varepsilon/4$. It follows easily by induction that $g^{n_0}(F_0)\supseteq F_{n_0}$, in other words, $g^{n_0}(F)\supseteq H$. Since $F,H\in \F$ were arbitrary and $\F$ is a cover, we obtain that $g^{n_0}(F)=X$ for every $F\in\F$. Thus the hypotheses of Lemma \ref{prop} are satisfied and therefore $\LEO(X)\cap \DP(X)\cap \cl{\LC(X)}\cap B(g,2\mesh\F)\ne\emptyset$. In particular, $\CT(X)\cap \cl{\LC(X)}\cap B(g,\xi)\ne\emptyset$ as $\LEO(X)\subseteq\CT(X)$ and $\mesh\F<\xi/2$.
    
    \end{proofofclaim}

Finally, we aim to prove that $B(g,\xi) \subseteq G_n$. To this end, fix any map $h$ satisfying $d(g,h)<\xi$; we will show that $h\in G_n$. To proceed, fix $F,H\in \H_n$ and $i\in\N$, $i\geq n_0$.  Our goal is to prove that $h^i(F)\cap H\ne\emptyset$. Since $i\geq n_0$, and by the choice of $n_0$, we can find  $x_0,\dots,x_{i}$, an $\varepsilon/4$-chain of $f$, such that $x_0\in F$ and $x_{i}\in H$. Put $F_0:=F$, $F_{i}:=H$ and for every $2\leq k\leq i-1$ fix any $F_k\in\H_n$ satisfying $x_k\in F_k$. We will prove by induction with respect to $k$, $0\leq k\leq i$, the following key claim.

    \begin{claim*}
        For every $0\leq k\leq i$ there exists $L\subseteq F$ such that $h^k(L)\subseteq U_{F_k}$ is a Peano continuum intersecting both $B(x_{F_k},\xi)$, $B(y_{F_k},\xi)$.
    \end{claim*}

    \begin{proofofclaim}
    If $k=0$ the statement becomes trivial, we can choose e.g $L:=\cl{B(x_0,3\xi)} $. Assume that the claim is true for $k-1$ and thus, by the induction hypothesis, there exists $L'\subseteq F$ such that $h^{k-1}(L')\subseteq U_{F_{k-1}}$ is a Peano continuum intersecting both $B(x_{F_{k-1}},\xi)$, $B(y_{F_{k-1}},\xi)$. Let us examine $g(h^{k-1}(L'))$ first. Since $h^{k-1}(L')\subseteq U_{F_{k-1}}$, we have 
    \[
    g|_{h^{k-1}(L')}= (t_{F_{k-1}} \circ s_{F_{k-1}})|_{h^{k-1}(L')}.
    \]
    Note that $h^{k-1}(L')$ intersecting both $B(x_{F_{k-1}},\xi)$, $B(y_{F_{k-1}},\xi)$ gives that $s_{F_{k-1}}(h^{k-1}(L'))$ intersects both $[0,1]$, $[2,3]$, by the definition of $s_{F_{k-1}}$. Thus $[1,2]\subseteq s_{F_{k-1}}(h^{k-1}(L'))$ by the connectedness of $L'$. Observe that
    \[
    d(F_k,f(F_{k-1}))\leq d(x_k,f(x_{k-1}))<\varepsilon/4
    \]    
    and hence $F_k\in \H_{F_{k-1}}$. Thus by the definition of $t_{F_{k-1}}$, there is an interval $I_{F_{k-1},F_k}\subseteq [1,2]$ such that $x_{F_k},y_{F_k} \in t_{F_{k-1}}(I_{F_{k-1},F_k})\subseteq \cl{B(x_{F_k},3\xi)}$ .
     
     By Lemma \ref{intermediatethrPeano} there is $L''\subseteq h^{k-1}(L')$ a Peano continuum satisfying $s_{F_{k-1}}(L'')=I_{F_{k-1},F_k}$. Thus $x_{F_k},y_{F_k} \in g(L'')\subseteq \cl{B(x_{F_k},3\xi)}$ as $g(L'')=t_{F_{k-1}}(s_{F_{k-1}}(L''))=t_{F_{k-1}}(I_{F_{k-1},F_k})$. Hence $d(g,h)<\xi$ entails that $h(L'')\subseteq B(x_{F_k},4\xi)\subseteq U_{F_k}$ is a Peano continuum intersecting both $B(x_{F_k},\xi)$, $B(y_{F_k},\xi)$. Hence $L:=h^{-(k-1)}(L'')\cap F$ is as desired since $h^k(L)=h(L'')$. 
    \end{proofofclaim}
  
    The last claim easily gives that $h^i(F)$ and $H$ intersect, thus proving that indeed $h\in G_n$, which concludes the proof.
\end{proof}

\begin{theorem}\label{genericmixing}
    For every Peano continuum $X$, a generic map in $\CT(X)\cap \cl{\LC(X)}$ is mixing.
\end{theorem}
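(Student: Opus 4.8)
My plan is to obtain the statement as a Baire-category consequence of Lemma~\ref{Gisgeneric}. We may assume $X$ is nondegenerate, the degenerate case being trivial. First I would record that $\CT(X)\cap\cl{\LC(X)}$ is a closed subset of $\C(X)$, since $\CT(X)=\cl{\CT(X)}$ and $\cl{\LC(X)}$ is closed by definition; as $\C(X)$ is completely metrizable, so is $\CT(X)\cap\cl{\LC(X)}$, and it is nonempty because the construction of Lemma~\ref{prop} (applied, as in Theorem~\ref{existenceofexactDevaney}, to the cover $\F=\{X\}$ with $n_0=1$) produces a map in $\LEO(X)\cap\DP(X)\cap\cl{\LC(X)}\subseteq\CT(X)\cap\cl{\LC(X)}$. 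Hence the Baire category theorem applies and ``generic in $\CT(X)\cap\cl{\LC(X)}$'' is meaningful.

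Next, for each $n\in\N$ set $W_n:=\int\bigl(G_n\cap\CT(X)\cap\cl{\LC(X)}\bigr)$, with the interior computed in the subspace topology of $\CT(X)\cap\cl{\LC(X)}$. Each $W_n$ is by definition open in that subspace, and it is dense there by Lemma~\ref{Gisgeneric}. Therefore $\mathcal{R}:=\bigcap_{n\in\N}W_n$ is a dense $G_\delta$ subset of $\CT(X)\cap\cl{\LC(X)}$; that is, a generic map $f\in\CT(X)\cap\cl{\LC(X)}$ lies in $\mathcal{R}$, and in particular in $G_n$ for every $n$. So it remains to verify that any $f\in\bigcap_{n\in\N}G_n$ is mixing.

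For this, let $U,V\subseteq X$ be nonempty and open, and choose $x\in U$, $y\in V$ together with $\varepsilon>0$ such that $B(x,\varepsilon)\subseteq U$ and $B(y,\varepsilon)\subseteq V$. Pick $n\in\N$ with $2^{-n}<\varepsilon$. Since $\H_n$ is a cover of $X$, there are $F,H\in\H_n$ with $x\in F$ and $y\in H$; and since $\diam F\leq\mesh\H_n<2^{-n}<\varepsilon$ while $x\in F$, every point of $F$ lies in $B(x,\varepsilon)$, so $F\subseteq U$, and similarly $H\subseteq V$. As $f\in G_n$, there is $k_0$ with $f^k(F)\cap H\ne\emptyset$ for all $k\geq k_0$, hence $f^k(U)\cap V\supseteq f^k(F)\cap H\ne\emptyset$ for all $k\geq k_0$. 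Thus $f$ is mixing.

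The substantive work here is entirely inside Lemma~\ref{Gisgeneric}, which is already available; the present theorem is a routine packaging of it, and I do not expect a real obstacle. The only point needing a little care is the last paragraph's reduction: the sets $G_n$ only encode a mixing-type condition for the fixed covers $\H_n$, and one must notice that this already forces genuine topological mixing -- which works precisely because $\mesh\H_n\to 0$ and each $\H_n$ covers $X$, so that some member of $\H_n$ is contained in any prescribed open ball.
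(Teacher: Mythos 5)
Your proposal is correct and follows essentially the same route as the paper: apply Lemma~\ref{Gisgeneric} and the Baire category theorem to see that a generic map in $\CT(X)\cap\cl{\LC(X)}$ lies in every $G_n$, then use that $\mesh\H_n\to 0$ and each $\H_n$ covers $X$ to pass from the $G_n$-conditions to genuine topological mixing. The only difference is that you spell out the completeness and nonemptiness of the ambient space, which the paper records earlier and leaves implicit here.
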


\begin{proof}
    It suffices to show that all maps in $G:=\bigcap_{n\in\N} G_n$ are mixing since $G\cap \CT(X)\cap \cl{\LC(X)}$ is comeager in $\CT(X)\cap \cl{\LC(X)}$ by Lemma \ref{Gisgeneric}. Let $f\in G$ and $U,V\subseteq X$ be nonempty and open. Then there exist $n\in\N$ and $F,H\in\H_n$ such that $F\subseteq U$ and $H\subseteq V$. Since $f\in G\subseteq G_n$, there exists $k_0$ such that for every $k\geq k_0$ the sets $f^k(F)$ and $H$ intersect. Therefore $f^k(U)$ and $V$ intersect for every $k\geq k_0$, proving that $f$ is mixing.
\end{proof}

\section{Shadowing is generic}

\begin{lemma} \label{opencoverslebesgue}
Let $X$ be a Peano continuum and $\delta>0$. 
Then there exists %$\lambda>0$ and 
a finite family $\F$ formed by Peano subcontinua of $X$, such that:
    \begin{itemize}[noitemsep]
    \item $\mesh(\F)<\delta$,
    \item $\{\int(F): F\in\F\}$ covers $X$,
%        \item for every $x\in X$ there exists $F\in\F$ satisfying $B(x,\lambda)\subseteq F$,
        \item for every $F\in\F$ the set $F\setminus \bigcup (\F \setminus \{F\})$ is nonempty (and open).
    \end{itemize}
\end{lemma}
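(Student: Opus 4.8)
The goal is: given a Peano continuum $X$ and $\delta > 0$, find a finite family $\F$ of Peano subcontinua with $\mesh(\F) < \delta$, whose interiors cover $X$, and such that each $F \in \F$ has a point (hence an open set) not belonging to any other member. This last property — that each $F$ "owns" a nonempty open piece — is the crux; the first two bullets follow almost immediately from Proposition \ref{partitions}.

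My plan is to start from Proposition \ref{partitions}: it hands us a finite cover $\G = \{G_1, \dots, G_k\}$ of $X$ by Peano continua with $\mesh(\G) \le \delta/2$ (say) whose interiors are pairwise disjoint and nonempty. Pairwise disjoint interiors is much stronger than what the third bullet asks, but the catch is that the \emph{interiors} need not cover $X$ — only the $G_i$ themselves do — so $\G$ fails the second bullet in general. The fix is to thicken each $G_i$ slightly. For each $i$, pick a point $p_i \in \int(G_i)$ (this will be the "owned" point — it lies in no other $G_j$ since the interiors are disjoint and $G_j$ is closed, so in fact $p_i \notin G_j$ for $j \ne i$, which is even better than needed). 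Then I want to replace each $G_i$ by a slightly larger Peano continuum $F_i \supseteq G_i$ so that the $F_i$'s have interiors covering $X$, while keeping $p_i \notin F_j$ for $j \ne i$ and $\diam(F_i) < \delta$.

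To carry out the thickening: the set $Y := X \setminus \bigcup_i \int(G_i)$ is closed with empty interior (since $\cl{\bigcup \int G_i} = X$), and it is covered by the $G_i$'s; a point $y \in Y$ may lie in several $G_i$. Using that $X$ has a convex metric and Proposition \ref{partitions} applied with a much finer mesh, refine once more: take a partition $\mathcal{E}$ of $X$ into Peano continua of mesh smaller than both $\delta/4$ and the Lebesgue number of the open cover $\{\int(G_i) \cup (X \setminus G_j) : \dots\}$ — more simply, fine enough that every $E \in \mathcal{E}$ meeting $G_i$ lies inside a $\delta/2$-neighborhood of $G_i$ and avoids $p_j$ for every $j$ with $p_j \notin \cl{E}$'s relevant neighborhood. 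Then set $F_i := G_i \cup \bigcup\{\cl E : E \in \mathcal{E},\ \cl E \cap G_i \ne \emptyset\}$. Each $F_i$ is a finite union of Peano continua with the connected piece $G_i$ meeting all of them, hence a Peano continuum (using \cite[Theorem 8.4 or 8.23]{nadler} on finite unions of Peano continua with a common connected core); $\diam F_i < \delta$ by the mesh bound; $\bigcup_i \int(F_i) \supseteq \bigcup_i G_i = X$ since $G_i \subseteq \int(F_i)$ once $F_i$ absorbs all $\mathcal E$-cells touching $G_i$ (any point of $G_i$ has a neighborhood contained in the union of those cells); and choosing $\mathcal{E}$ fine enough — so that no $\mathcal E$-cell touching $G_i$ contains $p_j$ for $j \ne i$ — keeps $p_i$ owned by $F_i$ alone.

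The main obstacle is precisely this last point: ensuring the thickening of $G_i$ does not swallow the owned point $p_j$ of another set. Since the $p_j$ are finitely many and each lies in the \emph{open} set $\int(G_j)$, while $G_i$ (for $i \ne j$) is disjoint from $\int(G_j)$, there is a positive distance $\rho > 0$ from $\{p_j\}_j$ to $\bigcup_{i \ne j} G_i$; taking $\mathcal{E}$ with mesh $< \rho/2$ guarantees that any $\mathcal E$-cell touching $G_i$ stays at distance $> \rho/2$ from every $p_j$ with $j \ne i$, so $p_j \notin F_i$. One should double-check that $F_i$ is genuinely a Peano continuum — it is a finite union of Peano continua all meeting the connected set $G_i$, hence connected, compact, metrizable, and locally connected (a finite union of locally connected compacta sharing points appropriately is locally connected; alternatively invoke property S plus \cite[Theorem 8.4]{nadler}). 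With that in hand, $\F := \{F_1, \dots, F_k\}$ satisfies all three bullets, the third witnessed by $p_i \in F_i \setminus \bigcup(\F \setminus \{F_i\})$.
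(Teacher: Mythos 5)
Your proposal is correct and follows the same two-step skeleton as the paper's proof --- first produce a fine cover in which each member ``owns'' a point, then fatten the members so that their interiors cover $X$ while protecting the owned points --- but both steps are implemented differently. The paper starts from an arbitrary finite cover $\H$ by small Peano continua made \emph{minimal} with respect to inclusion (minimality is what yields the owned points $x_H$), and then only patches the nowhere dense set $M=\bigcup\{\bd H: H\in\H\}$ with a finite family $\mathcal K$ of small Peano continua chosen outright to avoid the finitely many owned points, attaching each $K$ to a single $H$ it meets. You instead start from the disjoint-interior cover of Proposition \ref{partitions} (so the owned points come from the disjoint interiors), thicken each $G_i$ by absorbing \emph{all} cells of a much finer cover that touch it, and protect the owned points by a mesh-versus-distance argument ($\mesh\mathcal E<\rho/2$ with $\rho=\min_j d(p_j,\bigcup_{i\ne j}G_i)$). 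Both routes work: the paper's avoids the $\rho$ bookkeeping because the patches are simply chosen to miss the owned points, while yours gets $G_i\subseteq\int F_i$ essentially for free (the union of the finitely many closed cells \emph{not} containing a given $x\in G_i$ is closed and misses $x$, so its complement is a neighborhood of $x$ contained in the union of the cells that do contain $x$, all of which are absorbed into $F_i$). One spot needs tightening: your justification that $p_i\notin G_j$ for $j\ne i$ (``the interiors are disjoint and $G_j$ is closed'') does not rule out $p_i\in\bd G_j$; you should instead \emph{choose} $p_i\in\int G_i\setminus\bigcup_{j\ne i}G_j$, which is possible because if the nonempty open set $\int G_i$ were covered by the finitely many closed sets $G_j$, $j\ne i$, one of them would contain a nonempty open subset of $\int G_i$, contradicting the disjointness of interiors. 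That choice is also what makes $\rho>0$ legitimate; with it, the argument is complete.
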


\begin{proof}

Let $\H$ be a finite family of Peano subcontinua of $X$ with $\mesh\H<\delta/3$ which covers $X$ and such that $\H$ is minimal with respect to inclusion. Then for every $H\in\H$ there exists $x_H\in H\setminus \bigcup (\H\setminus \{H\})$.
Let $M=\bigcup \{\bd H:H\in\H\}$. Clearly, $M$ is a closed nowhere dense set which is disjoint with $\{x_H:H\in\H\}$.
By compactness of $M$, we may find a finite family $\mathcal K$ with

\begin{itemize}[noitemsep]
    \item $\mesh\mathcal K<\delta/3$,
    \item $\mathcal K$ is formed by Peano subcontinua of $X$ with nonempty interiors,
    \item $M\subseteq\bigcup \{\int K: K\in\mathcal K\}$,
    \item every $K\in\mathcal K$ is disjoint with the finite set $\{x_H:H\in\H\}$.
\end{itemize}
 For every $K\in\mathcal K$ there is some $H_K\in\H$ such that $K\cap H_K\neq\emptyset$. Let 
 \[H'=H\cup \bigcup \{K: H_K=H\}.\]
 
 Then the family $\F=\left\{H': H\in\H\right\}$
is the desired family. Indeed, $\diam H'<3 \delta/3=\delta$, $x_H\in H'\setminus \bigcup (\F \setminus \{H'\})$ and 
\[X\subseteq \bigcup \{\int H: H\in\H\}\cup\bigcup \{\int K:K\in\mathcal K\}\subseteq \bigcup \{\int H': H\in \H\}=\bigcup \{\int F: F\in\F\}.\]
\end{proof}

For the rest of the section, we will assume that $X$ is a fixed nondegenerate Peano continuum and $d$ is a fixed compatible convex metric on $X$ (every Peano continuum admits such a metric by \cite[Theorem 8]{Bing}). Under this setup, we denote for $\varepsilon>0$ 
\[
G_\varepsilon:= \bigcup_{\delta>0} \{f\in \C(X);\, \forall x_0,\dots,x_n \text{ a } \delta \text{-chain of }f \exists x\in X\forall i\leq n: d(x_i,f^i(x))<\varepsilon\}.
\]

\begin{lemma}\label{G_epsareopendense}
    For every $\varepsilon>0$ it holds that $\int (G_\varepsilon \cap \cl{\LC(X)}) $ is dense in $\cl{\LC(X)} $, where the interior is taken with respect to the subspace topology on $\cl{\LC(X)} $.
\end{lemma}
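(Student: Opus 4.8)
The statement is that $\int(G_\varepsilon\cap\cl{\LC(X)})$ is dense in $\cl{\LC(X)}$. Fix $f\in\cl{\LC(X)}$ and $\eta>0$; the goal is to produce $g\in\C(X)$ and $\xi>0$ with
\[
\emptyset\ne B(g,\xi)\cap\cl{\LC(X)}\subseteq G_\varepsilon\cap\cl{\LC(X)}\cap B(f,\eta),
\]
which is exactly the same template used in Lemma~\ref{Gisgeneric}. As there, I would first pass to $h\in\LC(X)$ with $d(f,h)<\eta/c$ for a suitable constant $c$, so that it suffices to perturb a genuinely locally constant map, and choose a scale at which $f$ (hence $h$) barely moves points. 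The core idea is the standard one for making shadowing robust: build $g$ so that it has a finite, \emph{``combinatorially attracting''} structure — a finite cover $\F$ by Peano subcontinua (from Lemma~\ref{opencoverslebesgue}, so each $F\in\F$ has a private open patch $F\setminus\bigcup(\F\setminus\{F\})$) together with a choice of a distinguished point/small ``target'' set in each $F$ and, for each pair $(F,F')$ that $g$ ``allows'' (i.e. with $F'$ close to $f(F)$), an honest Peano-continuum piece inside $F$ that $g$ maps \emph{onto} a neighborhood of the target of $F'$. The locally-eventually-onto-type machinery of Lemma~\ref{prop}/Lemma~\ref{Gisgeneric} already gives exactly this kind of map in $\LC(X)$, and the convexity of $d$ lets the ranges $Y_F=\bigcup\{H\in\F: d(f(F),H)\text{ small}\}$ be connected. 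So $g$ will be this $\LC$ map, nonemptiness of $B(g,\xi)\cap\cl{\LC(X)}$ being immediate.

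For the inclusion $B(g,\xi)\subseteq G_\varepsilon$, take any $\tilde g$ with $d(g,\tilde g)<\xi$; I must exhibit $\delta>0$ (depending only on $g,\xi,\varepsilon$, not on $\tilde g$) so that every $\delta$-chain of $\tilde g$ is $\varepsilon$-shadowed by a real $\tilde g$-orbit. Choose $\delta$ small enough that a $\delta$-chain $x_0,\dots,x_n$ of $\tilde g$ stays, at each step, inside the patch of some $F_k\in\F$ and that consecutive $F_{k}$ are ``allowed'' by $g$ (this uses $d(g,\tilde g)<\xi$, $\mesh\F$ small, and that $f$ moves points by less than the gap). Then I would run the same inductive ``magnifying'' argument as in the key claim of Lemma~\ref{Gisgeneric}: inductively produce Peano subcontinua $L_0\supseteq h^{-1}$-type preimages, i.e. a nested sequence of sets $L\subseteq X$ with $\tilde g^{k}(L)$ a Peano continuum contained in the patch of $F_k$ and hitting the target neighborhood of $F_k$. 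The point is that $g$ maps the relevant piece of $F_{k-1}$ \emph{onto} a $C\xi$-neighborhood of the target of $F_k$ (by the surjectivity built into $g$ via Lemma~\ref{LCXY} and Lemma~\ref{intermediatethrPeano}), so $\tilde g$, being $\xi$-close, still maps it \emph{into} a slightly larger neighborhood and still \emph{onto} something hitting the target — exactly the mechanism that let the claim in Lemma~\ref{Gisgeneric} propagate. At the end, compactness (the $L$'s are nonempty compacta, nested after pulling back) yields a point $x$ whose orbit $\tilde g^{k}(x)$ lies within $\mesh\F + O(\xi)<\varepsilon$ of $x_k$ for all $k\le n$, which is the shadowing conclusion. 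For the infinite-pseudo-orbit version one either iterates this over $n\to\infty$ using compactness, or simply invokes the equivalence (noted in the Preliminaries) that $\delta$-chain shadowing for all finite chains already gives the shadowing property.

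\textbf{Expected main obstacle.} The genuinely delicate point is the quantifier order: $\delta$ must be chosen \emph{before} $\tilde g$ and must work \emph{uniformly} over the whole ball $B(g,\xi)$. This forces the combinatorial structure of $g$ — which pairs $(F,F')$ are ``allowed'', and how much room the onto-maps have to spare — to be robust under $\xi$-perturbations; concretely one needs the target neighborhoods and the ``allowed'' relation to have a definite amount of slack (on the order of several multiples of $\xi$, as in the $B(x_H,\xi)\subseteq\dots\subseteq B(x_H,4\xi)\subseteq U_H$ bookkeeping of Lemma~\ref{Gisgeneric}) so that $\tilde g$ sees the same combinatorics as $g$. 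Getting all these constants to close up — $\mesh\F$ vs. $\xi$ vs. the modulus of continuity of $f$ vs. $\varepsilon$ — is the bulk of the work, but it is the same bookkeeping already carried out in Lemma~\ref{Gisgeneric}, so I expect no new conceptual difficulty, only a careful re-run of that argument with ``shadowing an $\varepsilon/4$-chain'' in place of ``forcing $f^k(F)\cap H\ne\emptyset$''.
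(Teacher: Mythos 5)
Your plan follows essentially the same route as the paper's proof: approximate $f$ by $h\in\LC(X)$, take the cover from Lemma~\ref{opencoverslebesgue} with private patches and use its Lebesgue number to fix $\delta$ uniformly over the ball $B(g,\xi)$, build $g$ by composing an interval-valued map $s_F$ with a surjective $t_F$ onto $\bigcup\F_F$ with several multiples of $\xi$ of slack, and then run the same inductive nested-continuum claim as in Lemma~\ref{Gisgeneric} to extract a shadowing point. The quantifier issue you flag is resolved in the paper exactly as you suggest (via the Lebesgue number of $\{\int F: F\in\F\}$), so your proposal is correct and matches the paper's argument.
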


\begin{proof}
    Let $f\in \cl{\LC(X)}$ and $\varepsilon,\nu>0$, we will find $g\in \C(X)$, $\xi>0$ such that 
    \[
\emptyset\ne B(g,\xi)\cap\cl{\LC(X)} \subseteq G_\varepsilon\cap  \cl{\LC(X)}\cap B(f,\nu).
\]
    There exists $0<\delta<\min\{\varepsilon, \nu/4, \diam (X)\}$ such that if $A\subseteq X$, $\diam A<\delta$, then $\diam f(A)<\nu/4$. There is $h\in \LC(X)$ satisfying $d(f,h)<\nu/4$. Let $\F$ be a cover of $X$ as guaranteed by Lemma \ref{opencoverslebesgue} for $\delta$. Let $\lambda>0$ be the Lebesgue number of $\{\int F: F\in\F\}$. Consider arbitrary $F\in\F$, we have that $U_F':= F\setminus \bigcup (\F \setminus \{F\})$ is nonempty and open by Lemma \ref{opencoverslebesgue}. Thus $h$ is constant on some nonempty open set $U_F  \subseteq U_F'$. Fix a point $x_F \in U_F$. Repeat this process for every $F\in\F$. Fix $0<\xi\leq\min\{\lambda/2,\nu/4\}$ satisfying that $B(x_F,4\xi)\subseteq U_F$ for every $F\in \F$.

    For every $F\in \F$ fix a point $y_F\in U_F$ satisfying $d(x_F,y_F)= 3\xi$. Let $s_F:\cl{U_F} \to [0,3\xi]$ be given by $s_F(z):= d(z,X\setminus B(x_F,3\xi))$. Clearly $s_F$ is continuous and $s_F(\bd F)=\{0\}$ as $\bd F\subseteq X\setminus B(x_F,3\xi)$. Denote
    \[\F_F := \{F'\in\F;\,d(f(F),F')<\varepsilon/4\}.\]
    Let $t_F\in\LC([0,3\xi],\bigcup \F_F)$ be any continuous map satisfying:
\begin{itemize}
    \item $\{t_F(0)\}=h(U_F)$,
    \item for every $F'\in\F_F$ there is an interval $I_{F,F'}\subseteq [\xi,2\xi]$ such that $x_{F'},y_{F'} \in t_{F}(I_{F,F'})\subseteq \cl{B(x_{F'},3\xi)}$.
\end{itemize}
Note that this is possible since $d$ is convex and $h(U_F)\in \bigcup \F_F$ as $d(f,h)<\varepsilon/4$; to see that it is indeed possible to find $t_F$ in $\LC$, it may be convenient to use Lemma \ref{LCXY}. Finally, we define $g:X\to X$ by
     \begin{equation}
    g(x) :=
    \begin{cases}
      (t_F \circ s_F)(x) & \text{if } x\in \cl{U_F}, F\in \F, \\
      h(x)        & \text{otherwise.} 
    \end{cases}
  \end{equation}
     
    Note that $g$ is well-defined as if $x\in U_F$ for some $F\in \F$, then $F$ is unique, and continuous since $(t_F \circ s_F)(\bd U_F)=t_F(0)=h(U_F)$ for every $F\in\F$ by our choices. We will prove that $g$, $\xi$ have the required properties in a series of claims.

\begin{claim*}
    $B(g,\xi) \subseteq  B(f,\nu)$.
\end{claim*}

\begin{proofofclaim}
    Let $x\in X$. If $x\in U_F$ for some $F\in \F$, then by the construction $g(x)\in\bigcup \F_F$ and hence  the definition of $\F_F$ and $f(x)\in f(F)$ give
    \[d(f(x), g(x))\leq \diam f(F)+\nu/4+\mesh \F\leq \nu/4+\nu/4+\delta\leq 3\nu/4.\]
    Otherwise $g(x)=h(x)$ and recall that $d(f,h)<\nu/4<3\nu/4$.
    
    Thus $\xi<\nu/4$ concludes the proof.
\end{proofofclaim}

\begin{claim*}
    $g\in B(g,\xi)\cap\LC(X)$ and thus $\emptyset\ne B(g,\xi)\cap\cl{\LC(X)}.$
\end{claim*}

\begin{proofofclaim}
    Let $U\subseteq U_F$ for some $F\in\F$ be nonempty and open; by possibly making it smaller, we may assume that $U$ is connected since $X$ is Peano. If $|s_F(U)|=1$, then clearly $g$ is constant on $U$. Otherwise, $s_F(U)$ is a nondegenerate interval and thus $t_F$ is constant on an open set $V\subseteq s_F(U)$. Thus, $g$ is constant on an open nonempty set $s_F^{-1}(V)\cap U$. Hence, we conclude that $g\in \LC(X)$ since checking the other case is straightforward.
\end{proofofclaim}

    To prove that $B(g,\xi) \subseteq G_\varepsilon$, fix arbitrary $h\in B(g,\xi)$ and we will show that every $\lambda/2$-chain of $h$ is $\varepsilon$-shadowed. Let $x_0,\dots,x_n$ be a $\lambda/2$-chain of $h$, then it is a $\lambda$-chain of $g$ since $d(f,g)<\xi\leq\lambda/2$. Hence there exist $F_1,\dots,F_n\in \F$ such that $x_i,g(x_{i-1})\in F_i$ for every $i=1,\dots,n$ since $\lambda$ is the Lebesgue number of $\{\int F: F\in\F\}$. Let $F_0\in\F$ satisfy $x_0\in F_0$.
    
    \begin{claim*}
        It holds that $F_k\in \F_{F_{k-1}}$ for every $1\leq k\leq n$.
    \end{claim*}

    \begin{proofofclaim}
        We will distinguish cases $g(x_{k-1})\in U_{F_k}$ and $g(x_{k-1})\in F_k \setminus U_{F_k}$. Assume $g(x_{k-1})\in U_{F_k}$. Then by the choice of $U_{F_k}$ out of all sets in $\F$, only $F_k$ contains $g(x_{k-1})\in U_{F_k}$. Therefore $F_k\in \F_{F_{k-1}}$ as $g(F_{k-1})\subseteq \bigcup \F_{F_{k-1}}$ for $\F_{F_{k-1}}\subseteq \F$, by the definition of $\F_{F_{k-1}}$. Assume $g(x_{k-1})\in F_k \setminus U_{F_k}$. Then $g(x_{k-1})=f(x_{k-1}) \in F_k\cap f(F_{k-1})$, hence $F_k \in \F_{F_{k-1}}$ by the definition of $\F_{F_{k-1}}$. Thus both distinguished cases give $F_k \in \F_{F_{k-1}}$.
    \end{proofofclaim}
    
    We will prove by induction with respect to $k$ the following key claim:
    
    \begin{claim*}
        For every $0\leq k\leq n$ there exists $C\subseteq X$ such that $h^i(C)\subseteq U_{F_i}$ for every $0\leq i\leq k$ and $h^k(C)$ is a Peano continuum intersecting both $B(x_{F_k},\xi)$, $B(y_{F_k},\xi)$.
    \end{claim*}

    \begin{proofofclaim}
        If $k=0$, we can choose $C$ to be, for example, an arc with endpoints $x_{F_0},y_{F_0}$ isometric to the interval $[0,d(x_{F_0},y_{F_0})]$, there is some since $d$ is convex. Assume that the claim is true for $k-1$ and we will show that it is true for $k$. Thus, by the induction hypothesis, there exists $C'\subseteq X$ such that $h^i(C')\subseteq U_{F_i}$ for every $i=1,\dots,k-1$ and $h^{k-1}(C')$ is a Peano continuum intersecting both $B(x_{F_{k-1}},\xi)$, $B(y_{F_{k-1}},\xi)$.
        
        Let us examine $g(h^{k-1}(C'))$. Firstly, since $h^{k-1}(C')\subseteq U_{F_{k-1}}$, we have $g|_{h^{k-1}(C')}= (t_{F_{k-1}} \circ s_{F_{k-1}})|_{h^{k-1}(C')}$. Secondly, since $h^{k-1}(C')$ is a Peano continuum intersecting both $B(x_{F_{k-1}},\xi)$, $B(y_{F_{k-1}},\xi)$, necessarily $s_{F_{k-1}}(h^{k-1}(C'))$ is a connected set intersecting both $[0,\xi]$, $[2\xi,3\xi]$. Hence $[\xi,2\xi]\subseteq s_{F_{k-1}}(h^{k-1}(C'))$. Since $F_k\in \F_{F_{k-1}}$ by the previous claim, there is an interval $I_{F_{k-1},F_k}\subseteq [\xi,2\xi]\subseteq s_{F_{k-1}}(h^{k-1}(C'))$ such that $x_{F_k},y_{F_k} \in t_{F_{k-1}}(I_{F_{k-1},F_k})\subseteq \cl{B(x_{F_k},3\xi)}$ by the definition of $t_{F_{k-1}}$.
        
        By Lemma \ref{intermediatethrPeano} there is $K\subseteq h^{k-1}(L')$ a Peano continuum satisfying $s_{F_{k-1}}(K)=I_{F_{k-1},F_k}$. Hence $x_{F_k},y_{F_k} \in g(K)\subseteq \cl{B(x_{F_k},3\xi)}$ as $g(K)=t_{F_{k-1}}(s_{F_{k-1}}(K))=t_{F_{k-1}}(I_{F_{k-1},F-k})$. Thus $d(g,h)<\xi$ entails that $h(K)\subseteq B(x_{F_k},4\xi)\subseteq U_{F_k}$ is a Peano continuum intersecting both $B(x_{F_k},\xi)$, $B(y_{F_k},\xi)$. Let $C:= C'\cap h^{-k-1}(K)$. The set $C$ has all the properties desired since $h^k(C)=h(K)$.
    \end{proofofclaim}
    
    Note that, in particular, the claim implies the existence of a nonempty set $C$ such that $h^i(C)\subseteq U_{F_i}$ for every $0\leq i\leq n$. Finally, note that the orbit of any $x\in C$ indeed $\varepsilon$-shadows $x_0,\dots,x_n$ since $\mesh \F<\delta<\varepsilon$.
\end{proof}

\begin{theorem}\label{genericshadowing}
    Shadowing is generic in $\cl{\LC(X)}$ for every Peano continuum $X$.
\end{theorem}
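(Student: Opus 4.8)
The plan is a routine Baire-category wrap-up of Lemma \ref{G_epsareopendense}. First I would note that $\cl{\LC(X)}$ is a closed subset of the complete separable metric space $\C(X)$, hence is itself completely metrizable and in particular a Baire space; so ``generic in $\cl{\LC(X)}$'' refers to comeagreness in this subspace, and it suffices to exhibit a comeager subset of $\cl{\LC(X)}$ all of whose members have the shadowing property.

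The candidate set is $G:=\bigcap_{n\in\N}\int\bigl(G_{1/n}\cap\cl{\LC(X)}\bigr)$, where the interiors are taken in the subspace topology of $\cl{\LC(X)}$. For each $n$, Lemma \ref{G_epsareopendense} (applied with $\varepsilon=1/n$) says precisely that $\int(G_{1/n}\cap\cl{\LC(X)})$ is dense in $\cl{\LC(X)}$, and it is of course also open in $\cl{\LC(X)}$. Therefore $G$ is a dense $G_\delta$ subset of $\cl{\LC(X)}$, hence comeager there.

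It then remains to check that every $f\in G$ has shadowing. Since $G\subseteq\bigcap_{n}G_{1/n}$, fix $f\in\bigcap_n G_{1/n}$ and let $\varepsilon>0$ be arbitrary; choose $n$ with $1/n<\varepsilon$. Unwinding the definition of $G_{1/n}$, there is $\delta>0$ such that every finite $\delta$-chain $x_0,\dots,x_m$ of $f$ is $(1/n)$-shadowed, hence a fortiori $\varepsilon$-shadowed, by the orbit of some point of $X$. By the compactness characterization of shadowing recorded in the preliminaries (namely, $f$ has shadowing iff for every $\varepsilon>0$ there is $\delta>0$ such that every finite $\delta$-chain is $\varepsilon$-shadowed), this yields $f\in\SH(X)$. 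Combining, $G\subseteq\SH(X)\cap\cl{\LC(X)}$ is comeager in $\cl{\LC(X)}$, which is exactly the assertion.

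As for the difficulty: essentially all the genuine content — the $\LC$-aware, window-type perturbation that upgrades an arbitrary map near $f$ into one that robustly $\varepsilon$-shadows all sufficiently fine chains — has already been packaged into Lemma \ref{G_epsareopendense}, so the present theorem carries no further real obstacle. The one point requiring a little care is purely formal: one must make sure that the interiors defining $G$ are taken relative to $\cl{\LC(X)}$ rather than relative to $\C(X)$, in which $\cl{\LC(X)}$ may well fail to have nonempty interior, so that the Baire argument is correctly carried out inside the Polish space $\cl{\LC(X)}$.
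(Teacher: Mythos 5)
Your argument is correct and is essentially identical to the paper's proof: the same set $G=\bigcap_{n}\int\bigl(G_{1/n}\cap\cl{\LC(X)}\bigr)$ (interiors relative to $\cl{\LC(X)}$), comeagreness via Lemma \ref{G_epsareopendense} and the Baire property of the closed subspace $\cl{\LC(X)}$, and the same unwinding of $G_{1/n}$ together with the finite-chain characterization of shadowing to conclude $G\subseteq\SH(X)$. No discrepancies to report.
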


\begin{proof}
    Let 
    \[
    G:= \bigcap\{\int(G_{1/n}\cap\cl{\LC(X)});\,n\in\N\},
    \]
    where the interiors are taken with respect to the subspace topology on $\cl{\LC(X)}$. Then $G$ is comeager in $\cl{\LC(X)}$ by Lemma \ref{G_epsareopendense}.
    It remains to check that $G\subseteq \SH(X)$. Let $f\in G$ and $\varepsilon>0$. We can find $n\in\N$ such that $1/n<\varepsilon$. Since $f\in G\subseteq G_{1/n}$, there is $\delta>0$ such that every $\delta$-chain of $f$ is $1/n$-shadowed and hence, in particular, $\varepsilon$-shadowed.
\end{proof}

\section{Acknowledgment}
The authors would like to thank Prof. Piotr Oprocha for his insightful remarks.

\bibliographystyle{alpha}
\bibliography{citace}
\end{document}